\documentclass[a4paper, 12pt]{article}


\usepackage[utf8]{inputenc}
\usepackage[english]{babel}
\usepackage{amsmath}
\usepackage{amssymb}
\usepackage{amsthm}
\usepackage{algorithm,algorithmicx,algpseudocode}
\usepackage{verbatim}
\usepackage{graphicx,color}
\usepackage{graphics}
\usepackage{subfigure}
\usepackage{mathrsfs}
\usepackage{epstopdf}

\newcommand{\T}{\mathcal{T}}
\newcommand{\E}{\mathcal{E}}

\newcommand{\V}{\mathcal{V}}

\newcommand{\fgref}[1]{Figure~\ref{#1}}

\newcommand{\defref}[1]{Definition~\ref{#1}}
\newcommand{\lemref}[1]{Lemma~\ref{#1}}

\newcommand{\thmref}[1]{Theorem~\ref{#1}}
\newcommand{\Enorm}[1]{||| #1 |||}
\newcommand{\Enormh}[1]{||| #1 |||_{\operatorname*{h}}}
\newcommand{\Enormd}[1]{||| #1 |||_{\operatorname*{h},\operatorname*{d}}}
\newcommand{\Enorma}[1]{||| #1 |||_{\operatorname*{h},\operatorname*{c}}}
\newcommand{\EnormaH}[1]{||| #1 |||_{\operatorname*{a},H}}
\newcommand{\EnormdH}[1]{||| #1 |||_{\operatorname*{d},H}}

\newcommand{\Enormhw}[2]{||| #1 |||_{h,#2}}
\newcommand{\EnormH}[1]{||| #1 |||_{H}}

\newcommand{\e}{\mathscr{E}}

\newcommand{\mscale}{A}

\newcommand{\VH}{\mathcal{V}_H}

\newcommand{\VmsH}{\mathcal{V}^{ms}_H}

\newcommand{\VmsLH}{\mathcal{V}^{ms,L}_{H}}

\newcommand{\Vh}{\mathcal{V}_h}
\newcommand{\Vfh}{\mathcal{V}^{\operatorname*{f}}} 
\newcommand{\Vfhw}[1]{\mathcal{V}^f(#1)}

\newcommand{\IcH}{\mathcal{I}^{\operatorname*{c}}_H}
\newcommand{\Lp}{\Pi_H}



\newcommand{\ba}{C_{A}}
\newcommand{\Cbubble}{\ba}
\newcommand{\Ccontinuous}{C_{c}}
\newcommand{\Cstable}{C_{s}}
\newcommand{\Ccutoff}{C_{\zeta}}
\newcommand{\Cbas}{C_{\phi}}

\newcommand{\supp}{\operatorname*{supp}}
\newcommand{\enorm}[1]{\left|\left|\left|#1\right|\right|\right|}

\newtheorem{theorem}{Theorem}
\newtheorem{lemma}[theorem]{Lemma}
\newtheorem{definition}[theorem]{Definition}

\newtheorem{remark}[theorem]{Remark}

\usepackage[utf8]{inputenc}


\begin{document} 
\title{A discontinuous Galerkin multiscale method for convection-diffusion problems}

\author{Daniel Elfverson}


\date{}

\maketitle

\begin{abstract}
  We propose an discontinuous Galerkin local orthogonal decomposition
  multiscale method for convection-diffusion problems with rough,
  heterogeneous, and highly varying coefficients. The properties of
  the multiscale method and the discontinuous Galerkin method allows
  us to better cope with multiscale features as well as
  interior/boundary layers in the solution. In the proposed method the
  trail and test spaces are spanned by a corrected basis computed on
  localized patches of size $\mathcal{O}(H\log(H^{-1}))$, where $H$ is
  the mesh size. We prove convergence rates independent of the
  variation in the coefficients and present numerical experiments
  which verify the analytical findings.  
\end{abstract}

\section{Introduction}
In this paper we consider numerical approximation of 
convection-diffusion problems with possible strong convection and with
rough, heterogeneous, and highly varying coefficients, without
assumption on scale separation or periodicity. This class of problems,
normally refereed to as multiscale problem, are know to be very
computational demanding and arise in many different areas of the
engineering sciences, e.g., in porous media flow and composite
materials.  More precisely, we consider the following
convection-diffusion equation: given any $f\in L^2(\Omega)$ we seek
$u\in H^1_0(\Omega)=\{v\in H^1(\Omega)\mid v|_{\Gamma}=0\}$ such that
\begin{equation}\label{eq:model}
  \begin{aligned}
    - \nabla \cdot A\nabla u + \mathbf{b}\cdot\nabla u &= f\quad\text{in }\Omega ,\\
  \end{aligned}
\end{equation}
is fulfilled in a weak sense, where $\Omega$ is the computational
domain with boundary $\Gamma$. The multiscale coefficients
$A,\mathbf{b}$ will be specified later.  There are two key issues
which make classical conforming finite element methods perform badly
for these kind of problems,
\begin{itemize}
\item the multiscale features of the coefficient need to be resolved
  by the finite element mesh and
\item strong convection leads to boundary and interior layers in the
  solution which need to be resolved.
\end{itemize}

To overcome the lack of performance using classical finite element
methods in the case of multiscale features in the coefficient many
different so called multiscale methods have been proposed, see
\cite{H95,HFMQ98,HW97,CEGH08,EHW00,EH09,EE03a,EE03b} among others,
which perform localized fine scale computations to construct a
different basis or a modified coarse scale operator. Common to the
aforementioned approaches is that the performance of the method rely
strongly on scale separation or periodicity of the diffusion
coefficients. There is also approaches which perform well without
scale separation or periodicity in the diffusion coefficient but to
high computational cost by either having to solve eigenvalue problems
\cite{BL11} or where the support of the localized patches is large
\cite{OZ11,BO10}. See also \cite{OZB14}.

In the variational multiscale method (VMS) framework \cite{H95,HFMQ98}
the solution space is split into coarse and fine scale
contribution. This idea was employed for multiscale problems in a
adaptive setting for classical finite element in \cite{LM07,M10,LM09b}
and to the discontinuous Galerkin (DG) method in \cite{EGM13}. A
further development is the local orthogonal decomposition (LOD) method,
see \cite{MP14ver1,HM14,HP13} for classical finite element and
\cite{EGMP13} for DG methods. The LOD operates in linear complexity
without any assumptions on scale separation or periodicity and the
trail and test spaces are spanned by a corrected basis function
computed on patches of size $\mathcal{O}(H\log(H^{-1}))$. The
LOD has e.g. been applied to eigenvalue problems \cite{MP14ver2},
non-linear elliptic problems \cite{HMP14}, non-linear Schrödinger
equation \cite{HMP13v2}, and in Petrov-Galerkin formulation
\cite{EGH14}.

There is a vast literature on numerical methods for convection
dominated problems, we reefer to \cite{JN81,HMM86,HFH89}, among
others. There has also been a lot of work on DG methods, we refer to
\cite{RH73,LR74,Bak77,JP86} for some early work and to
\cite{CKS00,Hes08,Riv08,PE12} and references therein for recent
development and a literature review.  DG methods exhibit attractive
properties for convection dominated problems, e.g., they have enhanced
stability properties, good conservation property of the state
variable, and the use of complex and/or irregular meshes are
admissible. For multiscale methods for convection-diffusion problems,
see e.g. \cite{AH14,Sod11,HO10}.

In this paper we extended the analysis of the discontinuous Galerkin
local orthogonal decomposition (DG-LOD) \cite{EGMP13} to
convection-diffusion problems. For problems with strong convection
using the standard LOD won't suffice, since convergence can no longer
be guarantied. Instead we propose to include the convective term in
the computations of the corrected basis functions. We prove
convergence results under some assumptions of the magnitude of the
convection and present a series of numerical experiment to verify the
analytic findings. For problems with weak convection it is not
necessary to include the convective part \cite{HMP14}.

The outline of this paper is as follows. In section \ref{sec:prel} the
discrete setting and underlying DG method is presented. In section
\ref{sec:MM} the multiscale decomposition, the DG-LOD, and the
corresponding convergence result are stated. In Section
\ref{sec:numerics} numerical experiments are presented. Finally, the
proofs for some of the theoretical results are given in Section
\ref{sec:proofs}.

\section{Preliminaries}\label{sec:prel}
In this section we present some notations and properties frequently
used in the paper. 

\subsection{Setting}
Let $\Omega\subset\mathbb{R}^d$ for $d=2,3$ be a polygonal domain with Lipschitz
boundary $\Gamma$. We assume that: the diffusion coefficients, $A\in
L^\infty(\Omega,\mathbb{R}^{d\times d}_{sym})$, has uniform spectral
bounds $0<\alpha,\beta<\infty$, defined by
\begin{equation}\label{eq:ba}
  0<\alpha:=\underset{x\in\Omega}{\operatorname{ess}\inf}\hspace{-1ex}
  \inf\limits_{v\in\mathbb{R}^{\operatorname*{d}}\setminus\{0\}}\hspace{-2ex}\dfrac{(A( x)v)\cdot v}{v\cdot v}\leq\underset{x\in\Omega}{\operatorname{ess}\sup}\hspace{-1ex}
  \sup\limits_{v\in\mathbb{R}^{\operatorname*{d}}\setminus\{0\}}\hspace{-2ex}\dfrac{(A( x)v)\cdot v
  }{v\cdot v}=:\beta<\infty,
\end{equation}
and the convective coefficient, $\mathbf{b}\in [W_\infty^1(\Omega)]^d$, is
divergence free
\begin{equation}\label{eq:c}
 \nabla\cdot\mathbf{b}(x) = 0  \text{ a.e. }x\in\Omega.
\end{equation}
We denote $\ba=(\beta/\alpha)^{1/2}$.

We will consider a coarse and a fine mesh, with mesh function $h$ and
$H$ respectively. Furthermore, we assume that the fine mesh resolve
and that the coarse mesh do not resolve the fine scale features in the
coefficients. Let $\T_{k}$, for $k=\{h,H\}$, denote a shape-regular
subdivision of $\Omega$ into (closed) regular simplexes or into
quadrilaterals/hexahedras ($d=2/d=3$), given a mesh function
$k:\T_{k}\to\mathbb{R}$ defined as $k:=\text{diam}(T)\in P_0(\T_{k})$
for all $T\in\T_{k}$. Also, let $\nabla_{k} v$ denote the
$\T_{k}$-broken gradient defined as $(\nabla v)\vert_T = \nabla
v\vert_T$ for all $T\in\T_{k}$. For simplicity we will also assume
that $\T_{k}$ is conforming in the sense that no hanging nodes are
allowed, but the analysis can easily be extend to non-conforming
meshes with a finite number of hanging nodes on each edge. Let $\hat
T$ be the reference simplex or (hyper)cube. We define
$\mathcal{P}_p(\hat T)$ to be the space of polynomials of degree less
than or equal to $p$ if $\hat T$ is a simplex, or the space of
polynomials of degree less than or equal to $p$, in each variable, if
$\hat T$ is a (hyper)cube. The space of discontinuous piecewise
polynomial function is defined by
\begin{equation}
  P_p(\T_{k}):=\{v:\Omega\to\mathbb{R}\mid \forall T\in\T_{k},\, v\vert_T\circ F_{T}\in\mathcal{P}_p(\hat{T})\},
\end{equation}
where $F_{T}:\hat T\to T$, $T\in\T_{k}$ is a family of element
maps. We will work with the spaces $\V_k:=P_1(\T_{k})$. Let
$\Pi_p(\T_{k}):L^2(\Omega)\to P_p(\T_{k})$ denote the $L^2$-projection
onto $P_p(\T_{k})$. Also, let $\E_{k}$ denote the set of all edges in
$\T_{k}$ where $\E_{k}(\Omega)$ and $\E_{k}(\Gamma)$ denote the
set of interior and boundary edges, respectively. Given that $T^+$ and
$T^-$ are two adjacent elements in $\T_{k}$ sharing an edge $e =
T^+\cap T^-\in\E_{k}(\Omega)$, let $\nu_e$ be the
the unit normal vector pointing from $T^-$ to $T^+$, and for
$e\in\E_{k}(\Gamma)$ let $\nu_e$ be outward unit normal of
$\Omega$. For any $v\in P_p(\T_{k})$ we denote the value on edge
$e\in\E(\Omega)$ as $v^\pm = v\vert_{e\cap T^\pm}$. The jump and
average of $v\in P_p(\T_{k})$ is defined as, $[v]=v^--v^+$ and
$\{v\}=(v^-+v^+)/2$ respectively for $e\in\E_{k}(\Omega)$, and $[v] =
\{v\} = v\vert_e$ for $e\in\E_{k}(\Gamma)$. For a real number $x$ we
define its negative part as $x^\ominus=1/2(|x|-x)$.

Let $0\leq C < \infty$ denote any generic constant that neither
depends on the mesh size or the variables $A$ and $\mathbf{b}$; then
$a \lesssim b$ abbreviates the inequality $a \leq C b$.

\subsection{Discontinuous Galerkin discretization}
For simplicity let the bilinear form
$a_{h}(\cdot,\cdot):\V_{h}\times\V_{h}\to\mathbb{R}$, given any mesh
function $h:\Omega\to P_0(\T_{h})$, be split into two parts
\begin{equation}
  a_{h}(u,v) := a^{\operatorname*{d}}_{h}(u,v) + a^{\operatorname*{c}}_{h}(u,v),
\end{equation}
where $a^{\operatorname*{d}}_{h}(\cdot,\cdot)$ represents the
diffusion part and $a^{\operatorname*{c}}_{h}(\cdot,\cdot)$
represents the convection part.  The diffusion part is
approximated using a symmetric interior penalty method
\begin{equation}
  \begin{aligned}
    a_{h}^{\operatorname*{d}}(u,v) &:= (\mscale\nabla_h u,\nabla_h v)_{L^2(\Omega)} + \sum_{e\in\E_h}\Big( \frac{\sigma_e}{h_e}([u],[v])_{L^2(e)} \\
    &\qquad  - (\{\nu_e\cdot\mscale\nabla u\},[v])_{L^2(e)} - (\{\nu_e\cdot\mscale\nabla v\},[u]_{L^2(e)})\Big),
  \end{aligned}
\end{equation}
where $\sigma_e$ is a constant, depending on the diffusion, large
enough to make $a_{h}^{\operatorname*{d}}(\cdot,\cdot)$ coercive. The
convective part is approximated by
\begin{equation}
  \begin{aligned}
    &    a_{h}^{\operatorname*{c}}(u,v) := (\mathbf{b}\cdot\nabla_h u,v)_{L^2(\Omega)}+\sum_{e\in\E_h(\Omega)}(b_e[u],[v])_{L^2(e)}\\ & \qquad-\sum_{e\in\E_h(\Omega)}(\nu_e\cdot\mathbf{b}[u],\{ v\})_{L^2(e)}+\sum_{e\in\E_h(\Gamma)}((\nu_e\cdot\mathbf{b})^{\ominus}u,v)_{L^2(e)},
  \end{aligned}
\end{equation}
where upwind is imposed choosing the stabilization term as $b_e =
|\mathbf{b}\cdot\nu_e|/2$ \cite{BMS04}. 

The following definitions and results are needed both on the fine and
coarse scale, for this sake let $k=\{h,H\}$. The energy norm on
$\V_{k}$ is given by
\begin{equation}
  \begin{aligned}
    \Enorm{v}^2_{k,\operatorname*{d}} &= \|A^{1/2}\nabla_{k} v \|^2_{L^2(\Omega)} + \sum_{e\in\E_{k}}\frac{\sigma_e}{k}\|[v]\|^2_{L^2(e)}, \\
    \Enorm{v}^2_{k,\operatorname*{c}} &=  \sum_{e\in\E_{k}}\|b_e^{1/2}[v]\|^2_{L^2(e)}, \\
    \Enorm{v}^2_{k}   &= \Enorm{v}^2_{k,\operatorname*{d}}+\Enorm{v}^2_{k,\operatorname*{c}}.
  \end{aligned}
\end{equation}
From Theorem 2.2 in \cite{KP03} we have that for each $v\in\V_{k}$,
there exist an averaging operator
$\mathcal{I}^c_{k}:\V_{k}\to\V_{k}\cap H^1(\Omega)$ with the following
property
\begin{equation}\label{eq:Ich}
  \|\nabla_{k}(v -\mathcal{I}^c_{k}v )\|_{L^2(\Omega)}+\|k^{-1}(v -\mathcal{I}^c_{k}v )\|_{L^2(\Omega)} \lesssim \sum_{e\in\E_{k}}\frac{1}{k}\|[v]\|^2_{L^2(e)}.
\end{equation}
In the error analysis we will also need a localized energy norm,
defined in a domain $\omega\subset\Omega$ (aligned with the mesh
$\T_k$) as
\begin{equation}
  \begin{aligned}
    \Enorm{v}^2_{k,\operatorname*{d},\omega} &= \|A^{1/2}\nabla_{k} v \|^2_{L^2(\omega)} + \sum_{\substack{e\in\E_{k}\\e\cap \bar\omega \neq 0}}\frac{\sigma_e}{k}\|[v]\|^2_{L^2(e)}, \\
    \Enorm{v}^2_{k,\operatorname*{c},\omega} &=  \sum_{\substack{e\in\E_{k}\\e\cap \bar\omega \neq 0}}\|b_e^{1/2}[v]\|^2_{L^2(e)}, \\
    \Enorm{v}^2_{k,\omega}   &= \Enorm{v}^2_{k,\operatorname*{d},\omega}+\Enorm{v}^2_{k,\operatorname*{c},\omega}.
  \end{aligned}
\end{equation}

\section{Multiscale method}\label{sec:MM}
In this section we preset the multiscale decomposition and extend the
results in \cite{EGMP13} to convection-diffusion problems. For the
constants in the convergence results to be stable we assume the
following relation of the convective term
\begin{equation}\label{eq:size_convection}
  \mathcal{O}\left(\frac{\| H\mathbf{b}\|_{L^\infty(\Omega)}}{\alpha}\right) \leq 1
\end{equation}
How the magnitude of \eqref{eq:size_convection} affects the
convergence of the method is investigated in the numerical
experiments.

\subsection{Multiscale decomposition}\label{sec:MD}
In order to do the multiscale decomposition the problem is divided
into a coarse and a fine scale. To this end let $\T_H$ and $\T_h$,
with the respective mesh function $H$ and $h$, denote the two
different subdivisions, where $\T_h$ is constructed by some (possible
adaptive) refinements of $\T_H$.

The aim of this section is to construct a coarse finite element space
based on $\T_H$, which takes the fine scale behavior of the data into
account. We assume that the mesh $\T_h$ resolves the variation in the
data, i.e., the solution to: find $u_h\in\Vh$ such that
\begin{equation}\label{eq:dg}
  a_h(u_h,v) = F(v)\quad\text{for all }v\in\Vh,
\end{equation}
gives a sufficiently good approximation of the weak solution $u$ to
\eqref{eq:model}. Note however that $u_h$ never have to be computed in
practice, it only acts as a reference solution.  We introduce a coarse
projection operator $\Lp:=\Pi_1(\T_H)$ and let the fine scale reminder
space be defined by the kernel of $\Lp$, i.e.,
\begin{equation}
  \Vfh := \{v\in\Vh \mid \Lp v = 0 \}\subset\Vh.
\end{equation}
The coarse projection operator has the following approximation and
stability properties.
\begin{lemma}\label{lem:stable}
  For any $v\in\Vh$ and $T\in\T_H$, the approximation property
  \begin{equation}\label{eq:Lp_approx}
    H\vert_T^{-1}\|v-\Lp v\|_{L^2(T)} \lesssim \alpha^{-1/2}\Enormhw{v}{T},
  \end{equation}
  and stability estimate
  \begin{equation}
    \EnormH{\Lp v} \lesssim \Cstable\Enormh{v},
  \end{equation}
  is satisfied, with
  \begin{equation}\label{eq:K}
    \Cstable = \left(\ba^2 + \frac{\|H\mathbf{b}\|_{L^\infty(\Omega)}}{\alpha}\right)^{1/2}.
  \end{equation}  
\end{lemma}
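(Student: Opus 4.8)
The plan is to exploit two facts: that the $L^2$-projection $\Lp=\Pi_1(\T_H)$ acts elementwise because $P_1(\T_H)$ is discontinuous, and that every $v\in\Vh$ splits as $v=\Ich v+(v-\Ich v)$, where $\Ich v\in\Vh\cap H^1(\Omega)$ is the continuous average from \eqref{eq:Ich} and the remainder is controlled by the fine-scale jump terms. The classical approximation and stability theory for the $L^2$-projection then applies cleanly to the $H^1$-regular part $\Ich v$, while the discontinuous remainder is absorbed through \eqref{eq:Ich}. Throughout, the spectral bounds \eqref{eq:ba} convert between $\|A^{1/2}\nabla_h\cdot\|$ and $\|\nabla_h\cdot\|$ at the price of factors $\alpha^{-1/2}$ and $\beta^{1/2}$, whose product is $\ba$.

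For the approximation property \eqref{eq:Lp_approx} I would write, on a fixed $T\in\T_H$, $v-\Lp v=(\Ich v-\Lp\Ich v)+\bigl((v-\Ich v)-\Lp(v-\Ich v)\bigr)$. The first term is handled by the classical estimate $\|\Ich v-\Lp\Ich v\|_{L^2(T)}\lesssim H|_T\|\nabla\Ich v\|_{L^2(T)}$, after which $\|\nabla\Ich v\|_{L^2(T)}\lesssim\|\nabla_h v\|_{L^2(T)}+\|\nabla_h(v-\Ich v)\|_{L^2(T)}$, the last summand being bounded by the fine jump terms via \eqref{eq:Ich}. The second term is bounded by $L^2$-stability of $\Lp$ together with the $\|h^{-1}(v-\Ich v)\|$ estimate in \eqref{eq:Ich}. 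Collecting everything, $H|_T^{-1}\|v-\Lp v\|_{L^2(T)}$ is dominated by $\|\nabla_h v\|_{L^2(T)}$ plus jump contributions over edges meeting $T$; estimating $\|\nabla_h v\|_{L^2(T)}\le\alpha^{-1/2}\|A^{1/2}\nabla_h v\|_{L^2(T)}$ and using $\sigma_e\gtrsim\alpha$ to bound the jump terms by $\alpha^{-1}\Enorm{v}^2_{h,\operatorname*{d},T}$ yields the announced factor $\alpha^{-1/2}$.

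For the stability estimate I would bound each of the three contributions to $\EnormH{\Lp v}^2$ separately. Since $\Lp v|_T$ is affine, an inverse inequality gives $\|\nabla\Lp v\|_{L^2(T)}\lesssim H|_T^{-1}\|\Lp v-\bar v_T\|_{L^2(T)}=H|_T^{-1}\|\Lp(v-\bar v_T)\|_{L^2(T)}$, where $\bar v_T$ is the elementwise mean; $L^2$-stability of $\Lp$ and the broken Poincaré inequality then give $\|\nabla\Lp v\|_{L^2(T)}\lesssim\|\nabla_h v\|_{L^2(T)}$ plus jumps, so the diffusion gradient term contributes a factor $\beta^{1/2}\alpha^{-1/2}=\ba$. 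For the coarse jump terms I would use that $\Ich v$ is globally continuous, hence $[\Lp v]=[\Lp v-\Ich v]$ across any coarse edge $e$; a scaled trace inequality on each adjacent coarse element reduces $H^{-1}\|[\Lp v]\|^2_{L^2(e)}$ to the already-controlled quantities $H|_T^{-2}\|\Lp v-\Ich v\|^2_{L^2(T)}$ and $\|\nabla_h(\Lp v-\Ich v)\|^2_{L^2(T)}$, and with $\sigma_e\sim\beta$ this again produces $\ba^2$. Finally, the convective jump term is estimated via $b_e\le\|\mathbf{b}\|_{L^\infty(\Omega)}/2$ together with the same bound on $H^{-1}\|[\Lp v]\|^2_{L^2(e)}$, which extracts the factor $\|H\mathbf{b}\|_{L^\infty(\Omega)}/\alpha$; summing the three pieces gives exactly $\Cstable^2=\ba^2+\|H\mathbf{b}\|_{L^\infty(\Omega)}/\alpha$ as in \eqref{eq:K}.

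The main obstacle is the discontinuity of $v$: the textbook approximation and $H^1$-type stability of $\Lp$ presuppose $H^1$-regularity that $v\in\Vh$ lacks, and the coarse jump terms have no analogue in the conforming LOD analysis. Both difficulties are resolved by the same device, namely replacing $v$ by its continuous average $\Ich v$ and paying for the swap with the fine-scale jump bound \eqref{eq:Ich}; the residual care is then purely bookkeeping of the edge sets (fine edges meeting a coarse element versus coarse edges) and checking that the penalty parameter obeys $\sigma_e\gtrsim\alpha$ so that the diffusion jump terms in $\Enormhw{v}{T}$ dominate the bare $H^{-1}\|[v]\|^2$ contributions.
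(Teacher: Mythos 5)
Your proposal is correct and reproduces the paper's overall architecture, but it diverges from the paper at one genuine point: the choice of conforming companion in the stability estimate. The paper builds the \emph{two-level} companion $v_c=\mathcal{C}_H\Ich v$, composing the fine averaging operator of \eqref{eq:Ich} with a coarse Cl\'ement-type interpolant from \cite{CV99}, so that $v_c\in\VH\cap H^1(\Omega)$; then $\Lp v_c=v_c$ and $[v_c]=0$ on interior coarse edges, hence $[\Lp v]=[\Lp(v-v_c)]$ is the jump of a coarse piecewise polynomial and a purely discrete trace inverse inequality reduces both the penalty and the convective jump terms to the single volume quantity $\|v-v_c\|_{L^2(T)}$, with no gradient bookkeeping; the gradient term is treated exactly as you do, by subtracting the elementwise mean $\Pi_0 v$ and using an inverse inequality plus $L^2$-stability of $\Lp$. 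You instead keep the \emph{one-level} companion $\Ich v$, which is continuous but not in $\VH$, so your trace step must carry both $H\vert_T^{-2}\|\Lp v-\Ich v\|^2_{L^2(T)}$ and $\|\nabla(\Lp v-\Ich v)\|^2_{L^2(T)}$, and you must separately control $\|\nabla\Ich v\|_{L^2(T)}$ through \eqref{eq:Ich}; this costs some extra bookkeeping but buys independence from the Cl\'ement operator and its stability estimate, and both routes deliver the same constant $\Cstable$ in \eqref{eq:K}, the gradient and penalty terms contributing $\ba^2$ and the bound $b_e\leq\|\mathbf{b}\|_{L^\infty(\Omega)}/2$ contributing $\|H\mathbf{b}\|_{L^\infty(\Omega)}/\alpha$. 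For the approximation property \eqref{eq:Lp_approx} you give the standard self-contained splitting $v=\Ich v+(v-\Ich v)$, which is essentially the proof of the result the paper simply cites as Lemma~5 of \cite{EGMP13}, so no objection there. Two points you should make explicit to close the argument: besides $\sigma_e\gtrsim\alpha$ (which you note, and which coercivity guarantees), you also need the upper bound $\sigma_e\lesssim\beta$ --- implicit in the paper when it extracts the factor $\beta$ in \eqref{eq:Lp_stability} --- since otherwise the coarse penalty contribution does not reduce to $\ba^2$; and the estimate \eqref{eq:Ich}, stated globally in the paper, must be invoked in its elementwise localized form from \cite{KP03}, so that for a fixed $T$ only edges meeting $\bar T$ appear, as the localized norm $\Enormhw{v}{T}$ in \eqref{eq:Lp_approx} requires.
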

\begin{proof}
  The approximation property follows directly from
  \cite[Lemma~5]{EGMP13}. Let $\mathcal{C}_H:H^1\to H^1\cap \V_H$ be a
  Clément type interpolation operator proposed in
  \cite[Section~6]{CV99} which satisfy
  \begin{equation}
    \|\nabla \mathcal{C}_H u \|_{L^2(T)} + \|H^{-1}( u - \mathcal{C}_H u )\|_{L^2(T)} \lesssim \|\nabla u \|_{L^2(\omega_T^1)},
  \end{equation}
  where $\omega_T^1=\text{int}(\cup \{T'\in T_H\mid T \cap T \neq
  0\})$ are the union of all elements that share a edge with $T$. We
  define the conforming function $v_c=\mathcal{C}_H\mathcal{I}^c_hv$
  using averaging operator in \eqref{eq:Ich}.  We obtain
  \begin{equation}\label{eq:Lp_stability}
    \begin{aligned}
    &\EnormH{\Lp v}^2 = \sum_{T\in\T_H}\|A^{1/2}\nabla(\Lp v- \Pi_0v)\|^2_{L^2(T)}
\\
    &\qquad+\hspace{-0.5cm}\sum_{e\in \E_h(\Omega\cup\Gamma_D)}\hspace{-0.2cm}\Big(\frac{\sigma}{H}\|[v_c - \Lp v]\|^2_{L^2(e)} + \|b_e^{1/2}[v_c-\Lp v]\|^2\Big)\\
    &\lesssim \sum_{T\in\T_H}\beta\left(\frac{1}{H^2}\| v- \Pi_0v\|^2_{L^2(T)}+\left(\frac{1}{H^2}+\frac{\|\mathbf{b}\|_{L^\infty(T)}}{H}\right)\|v_c - v\|^2_{L^2(T)}\right)\\
    \end{aligned}
  \end{equation}
  using that $\Pi_0:=\Pi_0(\T_H)$ is the $L^2$-projection onto
  constants, a trace inequality, and stability of $\Lp$. Next, using that
  \begin{equation}
    \begin{aligned}
      \|\mathcal{C}_H\mathcal{I}^c_hv-v\|_{L^2(\Omega)} &\leq \|\mathcal{C}_H\mathcal{I}^c_hv-\mathcal{I}^c_hv\|_{L^2(\Omega)}+\|\mathcal{I}^c_hv-v\|_{L^2(\Omega)} \\
      & \lesssim H \|\nabla \mathcal{I}^c_hv\|_{L^2(\Omega)} + \|\mathcal{I}^c_hv-v\|_{L^2(\Omega)} \\
      & \lesssim \alpha^{-1/2}H\Enormh{v}
    \end{aligned}
  \end{equation}
  in \eqref{eq:Lp_stability} concludes the proof.
\end{proof}

The following lemma shows that for every $v_H\in\VH$ there exist
a (non-unique) $v\in\Lp^{-1}v_H\in\Vh$ in the preimage of $\Lp$
which is $H^1(\Omega)$ conforming.
\begin{lemma}\label{lem:bubble}
  For each $v_H\in\V_H$, there exist a $v\in\V_h\cap H^1(\Omega)$ such
  that $\Lp v = v_H$, $\Enormh{v}\lesssim\Cbubble\EnormH{v_H}$, and
  $\supp(v)\subset\supp(\IcH v_H)$.
\end{lemma}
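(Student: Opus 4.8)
The plan is to construct $v$ as the conforming coarse representative $\IcH v_H$ corrected by fine-scale interior bubbles that repair the coarse projection, exploiting the fact that a conforming function carries no interior jumps and hence no convective energy, which is exactly what forces the purely diffusive constant $\Cbubble=\ba$.

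First I would set $v_c:=\IcH v_H\in\VH\cap H^1(\Omega)\subset\Vh\cap H^1(\Omega)$, using the averaging operator $\IcH$ of \eqref{eq:Ich} at scale $H$. Since $v_c\in P_1(\T_H)$ and $\Lp=\Pi_1(\T_H)$ is the $L^2$-projection onto $P_1(\T_H)$, we have $\Lp v_c=v_c$, so the defect to be repaired is exactly $d:=v_H-v_c=v_H-\IcH v_H$, with $d|_T\in P_1(T)$ on every $T\in\T_H$. On each coarse element $T$ I would choose a fine-scale function $w_T\in\Vh$ supported in the interior of $T$, continuous across the interior fine edges of $T$ and vanishing on $\partial T$, such that $\Lp w_T=d|_T$ (the restriction of $\Lp$ to $T$ being the local $L^2$-projection onto $P_1(T)$) and $w_T=0$ on $\Omega\setminus T$. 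Setting $v:=v_c+\sum_{T\in\T_H}w_T$ then gives $\Lp v=v_c+\sum_T d|_T=v_H$ by linearity; moreover $v\in H^1(\Omega)$ because $v_c$ is conforming and each $w_T$ vanishes on $\partial T$, so no new jumps appear. The interior support of the $w_T$ keeps $\supp(v)\subset\supp(\IcH v_H)$: a bubble is nonzero on $T$ only when $d|_T\neq 0$, and on any $T$ with $\IcH v_H\equiv 0$ the defect reduces to $v_H|_T$, which vanishes by the support-preserving locality of $\IcH$ (the averaging does not shrink supports).

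For the energy bound I would use that $v$ is conforming, so $\Enorma{v}$ and the interior-penalty part of $\Enormd{v}$ drop out, leaving only the bulk term $\|A^{1/2}\nabla v\|_{L^2(\Omega)}$ together with boundary contributions inherited from $\IcH v_H$; hence it suffices to prove the coefficient-free estimate $\|\nabla v\|_{L^2(\Omega)}\lesssim\alpha^{-1/2}\EnormH{v_H}$, since $\|A^{1/2}\nabla v\|_{L^2(\Omega)}\le\beta^{1/2}\|\nabla v\|_{L^2(\Omega)}$ then produces $\Cbubble=\ba=(\beta/\alpha)^{1/2}$. The gradient of $v_c$ is handled by splitting $\|\nabla v_c\|\le\|\nabla_H v_H\|+\|\nabla_H(v_H-\IcH v_H)\|$, bounding the first term by $\alpha^{-1/2}\|A^{1/2}\nabla_H v_H\|\le\alpha^{-1/2}\EnormH{v_H}$ and the second, via \eqref{eq:Ich}, by the jump seminorm $\big(\sum_{e\in\E_H}\tfrac1H\|[v_H]\|^2_{L^2(e)}\big)^{1/2}$, which is $\lesssim\alpha^{-1/2}\EnormH{v_H}$ once the penalty satisfies $\sigma_e\gtrsim\alpha$. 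For the bubbles, a scaling argument to the reference element gives $\|\nabla w_T\|_{L^2(T)}\lesssim H|_T^{-1}\|w_T\|_{L^2(T)}$ and local $L^2$-stability gives $\|w_T\|_{L^2(T)}\lesssim\|d|_T\|_{L^2(T)}=\|v_H-\IcH v_H\|_{L^2(T)}$; summing over the disjoint supports and applying \eqref{eq:Ich} once more yields $\sum_T\|\nabla w_T\|^2_{L^2(T)}\lesssim\|H^{-1}(v_H-\IcH v_H)\|^2_{L^2(\Omega)}\lesssim\sum_{e\in\E_H}\tfrac1H\|[v_H]\|^2_{L^2(e)}\lesssim\alpha^{-1}\EnormH{v_H}^2$. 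Combining the two contributions gives the required $\|\nabla v\|_{L^2(\Omega)}\lesssim\alpha^{-1/2}\EnormH{v_H}$.

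The step I expect to be the main obstacle is the existence and the $A$- and $H$-independent stability of the interior bubble $w_T$: one must show that $\Lp$ maps the space of fine-scale functions supported in the interior of $T$ (continuous inside $T$, vanishing on $\partial T$) \emph{onto} $P_1(T)$, with $\|w_T\|_{L^2(T)}\lesssim\|\Lp w_T\|_{L^2(T)}$ uniformly in $H$ and in the coefficients. This surjectivity requires $\T_h$ to be a fine enough refinement of $\T_H$ that every coarse element contains at least $\dim P_1(T)$ interior fine degrees of freedom, and it is precisely here that the refinement hypothesis on $\T_h$ is needed; the uniform stability then follows by a standard compactness/scaling argument on the reference element, as in the bubble construction of \cite{EGMP13}.
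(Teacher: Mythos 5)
Your construction---the conforming averaging representative $v_c=\IcH v_H$ plus interior $H^1_0(T)$ bubbles repairing the elementwise $L^2$-projection defect, with stability by scaling and \eqref{eq:Ich}---is exactly the construction behind \cite[Lemma~6]{EGMP13}, to which the paper's one-line proof delegates, and your key observation that conformity of $v$ annihilates the interior jump terms (so only the diffusive constant $\Cbubble=\ba$ survives, the convective part of $\EnormH{v_H}$ coming along for free) is precisely what the paper means by ``since $v\in H^1(\Omega)$''. The proposal is therefore correct and takes essentially the same route as the paper, merely writing out in full the cited argument, including the refinement hypothesis on $\T_h$ needed for the bubble surjectivity that the paper leaves implicit.
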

\begin{proof}
  Follows directly from \cite[Lemma~6]{EGMP13}, since $v\in
  H^1(\Omega)$.
\end{proof}

The next step is to split any $v\in\Vh$ into some coarse part based on
$\T_H$, such that the fine scale reminder in the space $\Vfh$ is
sufficiently small. A naive way to do this splitting is to use a
$L^2$-orthogonal split. An alternative definition of the coarse space
is $\VH=\Lp \Vh$. A set of basis functions that span $\VH$ is the
element-wise Lagrange basis functions $\{\lambda_{T,j}\mid T\in\T_H,\,
j=1,\dots,r\}$ where $r=(1+d)$ for simplexes or $r = 2^d$ for
quadrilaterals/hexahedra. The space $\VH$ is known to give poor
approximation properties if $\T_H$ does not resolve the variable
coefficients in \eqref{eq:model}. We will use another choice, see
\cite{MP14ver1,EGMP13}, based on $a_h(\cdot,\cdot)$, to construct a
space of corrected basis functions. To this end, we define a fine
scale projection operator $\mathfrak{F}:\V_h\to\Vfh$ by
\begin{equation}\label{eq:fineProjection}
  a_h(\mathfrak{F}v,w) = a_h(v,w)\quad\text{for all }w\in\Vfh,
\end{equation}
and let the corrected coarse space be defined as
\begin{equation}\label{eq:fineproj}
  \VmsH :=(1-\mathfrak{F})\VH.
\end{equation}
The corrected space are spanned by corrected basis functions $\VmsH
:=\{\lambda_{T,j} - \phi_{T,j}\mid T\in\T_H,\, j=1,\dots,r\}$ which
can be computed as: for all $T\in\T_H,\, j=1,\dots,r$ find
$\phi_{T,j}\in\Vfh$ such that
\begin{equation}\label{eq:idealBasis}
  a_h(\phi_{T,j},v) =  a_h(\lambda_{T,j},v)\quad\text{for all }v\in\Vfh.
\end{equation}
Note that,
$\text{dim}(\VmsH)=\text{dim}(\VH)$. From \eqref{eq:fineproj} we have
that any $v_h\in\V_h$ can be decomposed into a coarse
$v^{ms}_H\in\VmsH$ and a fine $v^f\in\Vfh$ scale contribution, $v_h =
v^{ms}_H+v^f$.
\begin{lemma}[Stability of the corrected basis
  function]\label{lem:bas} For all $T\in\T_H,\, j=1,\dots,r$, the
  following estimate
\begin{equation}
  \Enormh{\phi_{T,h}-\lambda_{T,j}} \lesssim \Cbas \beta^{1/2}\|H^{-1}\lambda_{T,j}\|_{L^2(\Omega)}
\end{equation}
holds, where $\Cbas = (\ba^2 +
\|H\mathbf{b}\|_{L^\infty(\Omega)}\alpha^{-1})^{1/2}$.
\end{lemma}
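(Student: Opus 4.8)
The plan is to exploit the Galerkin orthogonality built into the corrector together with coercivity of $a_h$, and to reduce everything to the \emph{coarse} energy norm of $\lambda_{T,j}$ via the conforming lifting of \lemref{lem:bubble}. Write $g := \lambda_{T,j} - \phi_{T,j} = (1-\F)\lambda_{T,j}$, so that $\Enormh{g} = \Enormh{\phi_{T,j}-\lambda_{T,j}}$. Equation \eqref{eq:idealBasis} rearranges to $a_h(g,w) = a_h(\lambda_{T,j},w) - a_h(\phi_{T,j},w) = 0$ for every $w\in\Vfh$, and since $\phi_{T,j}=\F\lambda_{T,j}\in\Vfh$ satisfies $\Lp\phi_{T,j}=0$, we also have $\Lp g = \Lp\lambda_{T,j} = \lambda_{T,j}$.

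First I would record that $a_h$ is coercive in the energy norm, $\Enormh{v}^2\lesssim a_h(v,v)$. The diffusion part is coercive by the usual symmetric interior penalty argument once $\sigma_e$ is chosen large enough, while for the convection part elementwise integration by parts combined with the divergence-free hypothesis \eqref{eq:c} annihilates the volume contribution, leaving $a_h^{\operatorname*{c}}(v,v) = \Enorma{v}^2$ plus the nonnegative upwind boundary terms; hence $a_h^{\operatorname*{c}}(v,v)\ge\Enorma{v}^2$. Next, applying \lemref{lem:bubble} with $v_H=\lambda_{T,j}$ furnishes a conforming, locally supported lifting $\tilde v\in\Vh\cap H^1(\Omega)$ with $\Lp\tilde v = \lambda_{T,j}$ and $\Enormh{\tilde v}\lesssim\ba\,\EnormH{\lambda_{T,j}}$. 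Because $\Lp(g-\tilde v) = \lambda_{T,j}-\lambda_{T,j}=0$, the difference $g-\tilde v$ lies in $\Vfh$, so the orthogonality above yields $a_h(g,g) = a_h(g,\tilde v)$; coercivity and continuity then give $\Enormh{g}^2\lesssim a_h(g,\tilde v)\lesssim\Enormh{g}\,\Enormh{\tilde v}$, whence $\Enormh{g}\lesssim\Enormh{\tilde v}\lesssim\ba\,\EnormH{\lambda_{T,j}}$.

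It then remains to evaluate the coarse energy norm of a single coarse Lagrange function. Since $\lambda_{T,j}$ is affine on $T$ and supported there, the inverse inequality $\|\nabla\lambda_{T,j}\|\lesssim\|H^{-1}\lambda_{T,j}\|$ and a trace inequality on $\partial T$ bound the diffusion part by $\beta^{1/2}\|H^{-1}\lambda_{T,j}\|$ and the convective jump part by $\|H\mathbf{b}\|_{L^\infty(\Omega)}^{1/2}\|H^{-1}\lambda_{T,j}\|$, so that $\EnormH{\lambda_{T,j}}\lesssim(\beta+\|H\mathbf{b}\|_{L^\infty(\Omega)})^{1/2}\|H^{-1}\lambda_{T,j}\|$. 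Combining with the previous display and using $\ba=(\beta/\alpha)^{1/2}$ gives $\Enormh{g}\lesssim\ba(\beta+\|H\mathbf{b}\|_{L^\infty(\Omega)})^{1/2}\|H^{-1}\lambda_{T,j}\| = \Cbas\,\beta^{1/2}\|H^{-1}\lambda_{T,j}\|$, which is the claim; note that the convective contribution to $\Cbas$ enters through $\EnormH{\lambda_{T,j}}$ rather than through the continuity constant.

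The main obstacle I anticipate is the continuity step $a_h(g,\tilde v)\lesssim\Enormh{g}\Enormh{\tilde v}$ with an $\mathcal{O}(1)$ constant. The diffusion part is standard and is simplified by $[\tilde v]=0$; the delicate term is the convective volume term $(\mathbf{b}\cdot\nabla_h g,\tilde v)$, which I would control by $\|\mathbf{b}\|_{L^\infty(\Omega)}\,\|\nabla_h g\|\,\|\tilde v\|\lesssim\alpha^{-1}\|H\mathbf{b}\|_{L^\infty(\Omega)}\,\Enormh{g}\,\Enormh{\tilde v}$, using $\|\nabla_h g\|\lesssim\alpha^{-1/2}\Enormh{g}$ and a Poincaré inequality $\|\tilde v\|\lesssim H\|\nabla\tilde v\|$ justified by the compact support of $\tilde v$. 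It is exactly here that the smallness condition \eqref{eq:size_convection}, i.e.\ $\|H\mathbf{b}\|_{L^\infty(\Omega)}\lesssim\alpha$, is required to keep this constant bounded; the non-symmetric upwind edge terms must likewise be absorbed by trace inequalities without disturbing the coercivity invoked at the outset.
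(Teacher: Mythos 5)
Your proposal is correct and takes essentially the same route as the paper's own proof: coercivity of $a_h$, the Galerkin orthogonality from \eqref{eq:idealBasis} tested against the conforming lifting of \lemref{lem:bubble} whose $\Lp$-image is $\lambda_{T,j}$ (so that the difference lies in $\Vfh$), followed by continuity and inverse/trace estimates to arrive at $\Cbas\beta^{1/2}\|H^{-1}\lambda_{T,j}\|_{L^2(\Omega)}$. The only difference is bookkeeping: the paper bounds the convective volume term $(\mathbf{b}\cdot\nabla_h(\phi_{T,j}-\lambda_{T,j}),b_{T,j})_{L^2(\Omega)}$ directly, carrying $\|H\mathbf{b}\|_{L^\infty(\Omega)}\alpha^{-1}$ in the continuity-type constant, whereas you push the convective dependence into $\EnormH{\lambda_{T,j}}$ via its jump terms and invoke \eqref{eq:size_convection} (the paper's standing assumption) to keep the continuity constant of order one --- both yield the stated constant, and your algebra $\ba(\beta+\|H\mathbf{b}\|_{L^\infty(\Omega)})^{1/2}=\Cbas\beta^{1/2}$ in fact recovers it exactly.
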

\begin{proof}
Let $v = \lambda_{T,j}-b_{T,j}\in\Vfh$, where $b_{T,j}\in H^1_0(T)$, $\Lp b_{T,j} = \lambda_{T,j}$, $\Enormh{b_{T,j}}\leq\ba\EnormH{\lambda_{T,j}}$ from \lemref{lem:bubble}. We have
\begin{equation}\label{eq:stabelBas1}
  \begin{aligned}
 &   \Enormh{\phi_{T,h}-\lambda_{T,j}}^2 \lesssim  a_h(\phi_{T,h}-\lambda_{T,j},\phi_{T,h}-\lambda_{T,j}) \\
    &    =  a_h(\phi_{T,h}-\lambda_{T,j},v-\lambda_{T,j})  =  a_h(\phi_{T,h}-\lambda_{T,j},b_{T,j}) \\
    &    =  a_h^d(\phi_{T,h}-\lambda_{T,j},b_{T,j})+\left(\mathbf{b}\cdot\nabla_h(\phi_{T,h}-\lambda_{T,j}),b_{T,j}\right)_{L^2(\Omega)}.
  \end{aligned}
\end{equation}
Using that the diffusion part in \eqref{eq:stabelBas1} of the bilinear
form is continuous in $(\Vh\times\Vh)$ with the constant $\ba$,
\lemref{lem:bubble}, and a inverse inequality, we get
\begin{equation}
  \begin{aligned}
  a_h^d(\phi_{T,h}-\lambda_{T,j},b_{T,j}) &\lesssim \ba\Enormh{\phi_{T,h}-\lambda_{T,j}}\Enormh{b_{T,j}} \\
& \lesssim \ba^2 \Enormh{\phi_{T,h}-\lambda_{T,j}}\EnormH{\lambda_{T,j}}\\
& \lesssim \ba^2\beta^{1/2} \Enormh{\phi_{T,h}-\lambda_{T,j}}\|H^{-1}\lambda_{T,j}\|_{L^2(T)}.
  \end{aligned}
\end{equation}
For the convection part in \eqref{eq:stabelBas1}, we have
\begin{equation}
  \begin{aligned}
&  (\mathbf{b}\cdot\nabla_h(\phi_{T,h}-\lambda_{T,j}),b_{T,j})_{L^2(\Omega)} \\ 
& \lesssim \|H\mathbf{b}\cdot\nabla_h(\phi_{T,h}-\lambda_{T,j})\|_{L^2(\Omega)}\|H^{-1}b_{T,j}\|_{L^2(\Omega)} \\
&\lesssim   \|H\mathbf{b}\|_{L^\infty(\Omega)}\|\nabla_h(\phi_{T,h}-\lambda_{T,j})\|_{L^2(\Omega)} \|H^{-1}\lambda_{T,j}\|_{L^2(\Omega)},
  \end{aligned}
\end{equation}
and obtain
\begin{equation}
  \Enormh{\phi_{T,h}-\lambda_{T,j}} \leq \Cbas\beta^{1/2}\|H^{-1}\lambda_{T,j}\|_{L^2(\Omega)}.
\end{equation}
with $\Cbas = (\ba^2 + \|H\mathbf{b}\|_{L^\infty(\Omega)}\alpha^{-1})^{1/2}$.
\end{proof}

\subsection{Ideal discontinuous Galerkin multiscale method }
\label{sec:IdealDGMM}
An ideal multiscale method seeks $u^{ms}_H\in\VmsH$ such that
\begin{equation}\label{eq:ideal}
  a_h(u^{ms}_H,v) = F(v)\quad\text{for all }v\in\VmsH.
\end{equation}
Note that, to construct in the space $\VmsH$ a variational problem
has to be solved on the whole domain $\Omega$ for each basis function,
which is not feasible for real computations. The following theorem
shows the convergence of the ideal (non-realistic) multiscale method.
\begin{theorem}\label{thm:ideal}
  Let $u_h\in\Vh$ be the solution to \eqref{eq:dg}, and
  $u^{ms}_H\in\VmsH$ be the solution to \eqref{eq:ideal}, then
  \begin{equation}
    ||| u_h - u^{ms}_H ||| \lesssim C_1 \alpha^{-1/2}||H(f-\Lp f)||_{L^2(\Omega)}
  \end{equation}
  holds, with $C_1 = \ba + \|H\mathbf{b}\|_{L^\infty(\Omega)}\alpha^{-1} $
\end{theorem}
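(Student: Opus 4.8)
The plan is to exploit the $a_h$-orthogonality built into the corrector, which survives even though $a_h$ is non-symmetric. From the definition \eqref{eq:fineProjection} of $\mathfrak{F}$, every $\psi=(1-\mathfrak{F})w_H\in\VmsH$ satisfies $a_h(\psi,w)=a_h(w_H,w)-a_h(\mathfrak{F}w_H,w)=0$ for all $w\in\Vfh$, so $a_h(\VmsH,\Vfh)=0$ (orthogonality in the \emph{first} argument). Writing $e=u_h-\umshH$ and decomposing it as $e=e^{ms}+e^{f}$ along $\Vh=\VmsH\oplus\Vfh$, the fine part is $e^{f}=u_h-(1-\mathfrak{F})\Lp u_h$, i.e. it depends on $u_h$ alone, since the fine part of $\umshH\in\VmsH$ vanishes; one checks $\Lp e^{f}=0$. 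Because $a_h$ is not symmetric one cannot conclude $e\in\Vfh$, so $e^{f}$ and $e^{ms}$ must be estimated separately, and the convection-dependent constant $C_1$ will enter only through the second.

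First I bound $\Enormh{e^{f}}$. Using coercivity of $a_h$ on $\Vh$ (guaranteed by the choice of $\sigma_e$ and the upwind stabilization, together with $\nabla\cdot\mathbf{b}=0$), the identity $e^{f}=u_h-(1-\mathfrak{F})\Lp u_h$, the orthogonality above, and the reference problem \eqref{eq:dg},
\[
\Enormh{e^{f}}^2 \lesssim a_h(e^{f},e^{f}) = a_h(u_h,e^{f}) - a_h((1-\mathfrak{F})\Lp u_h,e^{f}) = F(e^{f}),
\]
because $a_h((1-\mathfrak{F})\Lp u_h,e^{f})=0$ and $a_h(u_h,e^{f})=F(e^{f})$ as $e^{f}\in\Vfh\subset\Vh$. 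Since $\Lp e^{f}=0$, self-adjointness of the $L^2$-projection $\Lp$ gives $F(e^{f})=(f-\Lp f,e^{f})_{L^2(\Omega)}$, which I bound elementwise by $\|H(f-\Lp f)\|_{L^2(T)}\|H^{-1}e^{f}\|_{L^2(T)}$ and then control with the approximation property \eqref{eq:Lp_approx} of \lemref{lem:stable}, $\|H^{-1}e^{f}\|_{L^2(T)}\lesssim\alpha^{-1/2}\Enormhw{e^{f}}{T}$. Summing over $T\in\T_H$ and using Cauchy--Schwarz yields $\Enormh{e^{f}}\lesssim\alpha^{-1/2}\|H(f-\Lp f)\|_{L^2(\Omega)}$, with no dependence on the convection.

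Next I bound $\Enormh{e^{ms}}$, where $C_1$ appears. Here $e^{ms}=(1-\mathfrak{F})\Lp u_h-\umshH\in\VmsH$, and since $(1-\mathfrak{F})\Lp u_h=u_h-e^{f}$ we get $a_h((1-\mathfrak{F})\Lp u_h,v)=a_h(u_h,v)-a_h(e^{f},v)=F(v)-a_h(e^{f},v)$ for $v\in\VmsH$; subtracting the ideal problem \eqref{eq:ideal} gives the defect equation $a_h(e^{ms},v)=-a_h(e^{f},v)$ for all $v\in\VmsH$. Testing with $v=e^{ms}$ and using coercivity, $\Enormh{e^{ms}}^2\lesssim -a_h(e^{f},e^{ms})$, so everything reduces to a continuity estimate for $a_h$ with its first argument in the fine-scale space $\Vfh$.

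This continuity estimate is the main obstacle. The diffusion part is handled by continuity of $a^{\operatorname*{d}}_h$, giving $\ba\,\Enormh{e^{f}}\Enormh{e^{ms}}$. For the convection part the direct bound fails, so I integrate the volume term $(\mathbf{b}\cdot\nabla_h e^{f},e^{ms})_{L^2(\Omega)}$ by parts elementwise, using $\nabla\cdot\mathbf{b}=0$ from \eqref{eq:c}, to move the gradient onto $e^{ms}$; the resulting term $(H\mathbf{b}\cdot\nabla_h e^{ms},H^{-1}e^{f})_{L^2(\Omega)}$ is controlled by $\|H\mathbf{b}\|_{L^\infty(\Omega)}\alpha^{-1/2}\Enormh{e^{ms}}$ times $\|H^{-1}e^{f}\|_{L^2(\Omega)}\lesssim\alpha^{-1/2}\Enormh{e^{f}}$, again by \eqref{eq:Lp_approx}, producing the factor $\|H\mathbf{b}\|_{L^\infty(\Omega)}\alpha^{-1}$. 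The DG edge contributions must be tracked in the same scaled norms so that they are absorbed into the jump seminorms $\Enorm{e^{f}}_{h,\operatorname*{c}}$ and $\Enorm{e^{ms}}_{h,\operatorname*{c}}$. Altogether $|a_h(e^{f},e^{ms})|\lesssim C_1\,\Enormh{e^{f}}\Enormh{e^{ms}}$ with $C_1=\ba+\|H\mathbf{b}\|_{L^\infty(\Omega)}\alpha^{-1}$, hence $\Enormh{e^{ms}}\lesssim C_1\Enormh{e^{f}}$. The triangle inequality $\Enormh{e}\le\Enormh{e^{ms}}+\Enormh{e^{f}}\lesssim C_1\Enormh{e^{f}}$, combined with the bound on $\Enormh{e^{f}}$ from the previous step, yields the asserted estimate.
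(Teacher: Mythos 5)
Your proof is correct, and its first half coincides with the paper's: the decomposition $u_h=v^{ms}_H+v^f$ along $\Vh=\VmsH\oplus\Vfh$ (your $e^f$ is exactly $v^f$), and the bound $\Enormh{v^f}\lesssim\alpha^{-1/2}\|H(f-\Lp f)\|_{L^2(\Omega)}$ via coercivity, the trial-slot orthogonality $a_h(\VmsH,\Vfh)=0$, self-adjointness of $\Lp$, and the approximation property \eqref{eq:Lp_approx}, is precisely the paper's \eqref{eq:ideal1}. The second half is where you genuinely diverge. The paper treats the full error $e=u_h-u^{ms}_H$ at once: coercivity plus Galerkin orthogonality of $e$ against the test space $\VmsH$ give $\Enormh{e}^2\lesssim a_h(e,u_h-v^{ms}_H)=a_h(e,v^f)$, and then the $(\Vh\times\Vfh)$ case of \lemref{lem:continuous} --- fine-scale function in the \emph{second} slot, which needs no integration by parts --- finishes in one line. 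You instead split $e=e^{ms}+e^f$, derive the defect equation $a_h(e^{ms},v)=-a_h(e^f,v)$ on $\VmsH$, and invoke continuity with the fine-scale function in the \emph{first} slot, i.e.\ the $(\Vfh\times\Vh)$ case, which is the harder half of \lemref{lem:continuous} and requires exactly the elementwise integration by parts (using $\nabla\cdot\mathbf{b}=0$) that you sketch; your statement that the edge terms produced by this integration by parts ``must be tracked in the same scaled norms'' is terse, but it is precisely the content of the cited lemma, so you could simply invoke \lemref{lem:continuous} rather than re-derive it, and no gap results. Both routes give the same constant, since your $1+\Ccontinuous\lesssim\Ccontinuous$ because $\Ccontinuous\geq\ba\geq 1$. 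What your version buys is structural transparency: it isolates the fact that the fine part of the error depends on $u_h$ alone and carries no convection-dependent constant, while $C_1$ enters only through the coarse part, and your observation that nonsymmetry of $a_h$ prevents concluding $e\in\Vfh$ correctly identifies why some continuity estimate is unavoidable here. What the paper's version buys is brevity: one application of Galerkin orthogonality, no error splitting, and only the easy slot of the continuity lemma.
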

\begin{proof} See Section \ref{sec:proofs}.
\end{proof}

\subsection{Discontinuous Galerkin multiscale method}
\label{sec:DGMM}
The fast decay of the corrected basis functions (\lemref{lem:decay}),
motivates us to solve the corrector functions on localized
patches. This introduces a localization error, but choosing the patch
size as $\mathcal{O}(H\log(H^{-1}))$ (\thmref{thm:convergence}) the
localization error has the same convergence rate as the ideal
multiscale method in \thmref{thm:ideal}. The corrector functions are
solved on element patches, defined as follows.
\begin{definition}\label{def:patches}
  For all $T\in\T_H$, let $\omega^L_T$ be a patch centered around
  element $T$ with size $L$, defined as
  \begin{equation}
    \begin{aligned}
      \omega^0_T &:=\mathrm{int}(T), \\
      \omega^L_T &:= \mathrm{int}(\cup\{T'\in T_H \mid T\cap \bar\omega^{L-1}_T\neq 0\}),\quad L=1,2,\dots.
    \end{aligned}
  \end{equation}
  See \fgref{fig:patch} for an illustration.
\end{definition}
\begin{figure}[t]
  \centering
  \includegraphics[width=0.4\textwidth]{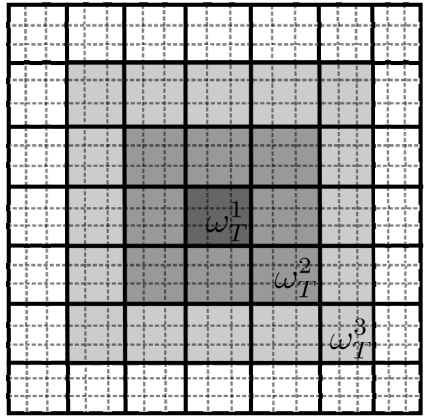}
  \caption{Example of a patch with one layer, $\omega^1_T$, two layers $\omega^2_T$, and three layers $\omega^3_T$, centered around element T.}
  \label{fig:patch}
\end{figure}

The localized corrector functions are calculated as follows: for all $\{T\in\T_H, j=1,\dots,r\}$ find $\phi^L_{T,j}\in\Vfhw{\omega^L_T} = \{v\in\Vfh \mid v|_{\Omega\setminus\omega^L_T} = 0\}$  such that
\begin{equation}\label{eq:locaBasis}
  a_h(\phi^L_{T,j},v) = a_h(\lambda_{T,j},v),\quad\text{for all }v\in\Vfhw{\omega^L_T}.
\end{equation}
The decay of the corrected basis function is given in the following lemma.
\begin{lemma}\label{lem:decay}
  For all $T\in\T_H$, $j=1,\dots,r$ where $\phi_{T,j}$ is the solution
  to \eqref{eq:idealBasis} and $\phi^{L}_{T,j}$ is the solution to
  \eqref{eq:locaBasis}, the following estimate
  \begin{equation}
    \enorm{\phi_{T,j}-\phi^{L}_{T,j}}_h \lesssim C_2\gamma^L \Enormh{\lambda_{T,j}-\phi^L_{T,j}}
  \end{equation}
  holds, where $L=\ell k$ is the size of the patch, $0<\gamma =
  (\ell^{-1}C_3)^{\frac{\ell(k-1)-1}{2\ell k(\ell+1)}}<1$, $C_2 =
  \Ccontinuous\Ccutoff(1+\Cbubble\Cstable)$, and  $C_3 =
  C(\ba^2+\|H\mathbf{b}\|_{L^\infty(\Omega)}\alpha^{-1})$, where $C$ is
  a generic constants neither depending on the mesh size, the size of
  the patches, or the problem data.
\end{lemma}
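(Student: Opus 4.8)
Write $e := \phi_{T,j} - \phi^{L}_{T,j}$. The starting point is \emph{Galerkin orthogonality}: since the global corrector \eqref{eq:idealBasis} and the localized corrector \eqref{eq:locaBasis} both reproduce $a_h(\lambda_{T,j},\cdot)$ against every test function in $\Vfhw{\omega^L_T}\subset\Vfh$, subtracting the two defining equations gives $a_h(e,v)=0$ for all $v\in\Vfhw{\omega^L_T}$. Coercivity of $a_h$ then yields $\enorm{e}_h^2\lesssim a_h(e,e)=a_h(e,e-v)$ for any admissible $v$; here coercivity holds because the penalty renders $a_h^{\operatorname*{d}}$ coercive and, with $\mathbf{b}$ divergence free and upwind fluxes, $a_h^{\operatorname*{c}}(w,w)\ge 0$ (the standard computation from \cite{BMS04}). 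The entire argument is then a matter of choosing $v$ so that $e-v$ lives in a thin annulus near $\partial\omega^L_T$, where $e$ is already small, and then iterating.

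First I would fix, for each intermediate radius, a cutoff $\eta$ that passes from one constant value on an inner coarse region to another on the complement, transitioning over $\ell$ coarse layers so that $\|\nabla\eta\|_{L^\infty(\Omega)}\lesssim(\ell H)^{-1}$; the associated constant is $\Ccutoff$. To obtain an admissible test function I would take the $\Vh$-interpolant of $\eta e$ and subtract the locally supported bubble $b$ furnished by \lemref{lem:bubble}, which realises $\Lp(\eta e)$ with $\Enormh{b}\lesssim\Cbubble\EnormH{\Lp(\eta e)}$; stability of $\Lp$ from \lemref{lem:stable} bounds $\EnormH{\Lp(\eta e)}\lesssim\Cstable\Enormh{\eta e}$. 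The resulting $v$ lies in $\Vfhw{\omega^L_T}$, and this construction is exactly where the prefactor $(1+\Cbubble\Cstable)$ originates. Crucially, because $e\in\Vfh$ we have $\Lp e=0$, so $\Lp(\eta e)$ is a commutator that is nonzero only in the transition annulus of $\eta$; this confines $e-v$ to that annulus.

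The one-step estimate then reads $\enorm{e}_{h,\,\Omega\setminus\omega^{m\ell}_T}^2\lesssim a_h(e,e-v)$, and continuity of $a_h$ (constant $\Ccontinuous$) together with an inverse inequality turns the bound $\|\nabla\eta\|_{L^\infty}\lesssim(\ell H)^{-1}$ into a factor $\ell^{-1}C_3$ multiplying the energy of $e$ on the annulus. \textbf{The main obstacle is the convection contribution.} Since $a_h^{\operatorname*{c}}$ is not symmetric, the orthogonality does not cancel the volume term $(\mathbf{b}\cdot\nabla_h(\eta e),\cdot)_{L^2(\Omega)}$ and its companion face terms cleanly; differentiating the cutoff produces commutators of the type $(\mathbf{b}\,(\nabla_h\eta)\,e,\cdot)_{L^2(\Omega)}$. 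These are absorbed by extracting $\|H\mathbf{b}\|_{L^\infty(\Omega)}$ and dividing by $\alpha$, which is precisely why $C_3=C(\ba^2+\|H\mathbf{b}\|_{L^\infty(\Omega)}\alpha^{-1})$ and why the smallness hypothesis \eqref{eq:size_convection} is needed to keep $C_3$, and hence the contraction, bounded independently of the data.

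Finally I would iterate. Partitioning $\omega^L_T$ into $k$ nested annuli of width $\ell$ (so that $L=\ell k$), the one-step estimate shows that the energy of $e$ outside $\omega^{m\ell}_T$ is a fixed fraction $\ell^{-1}C_3$ of its energy outside $\omega^{(m-1)\ell}_T$, a genuine contraction once $\ell>C_3$. Composing the $k$ contractions and accounting for the layers lost to each transition region produces the geometric rate $\gamma=(\ell^{-1}C_3)^{\frac{\ell(k-1)-1}{2\ell k(\ell+1)}}<1$ and the factor $\gamma^L$. A concluding application of coercivity and continuity on the full domain bounds $\enorm{e}_h$ by the energy carried by the localized corrected basis function, yielding the stated right-hand side $C_2\gamma^L\Enormh{\lambda_{T,j}-\phi^L_{T,j}}$ with $C_2=\Ccontinuous\Ccutoff(1+\Cbubble\Cstable)$.
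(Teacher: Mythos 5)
Your superstructure matches the paper's proof up to and including the one-step C\'ea estimate: Galerkin orthogonality $a_h(e,v)=0$ for $v\in\Vfhw{\omega^L_T}$, coercivity of the upwinded form, and the cutoff-plus-bubble test function (Lemma~\ref{lem:bubble} for the bubble, Lemma~\ref{lem:stable} for $\Lp$-stability, Lemma~\ref{lem:cutoff} for the cutoff) are exactly how the paper produces $C_2=\Ccontinuous\Ccutoff(1+\Cbubble\Cstable)$, and your identification of the convection commutators as the source of $C_3=C(\ba^2+\|H\mathbf{b}\|_{L^\infty(\Omega)}\alpha^{-1})$ is also correct. One secondary discrepancy: the paper's cutoff $\zeta^{d,D}_T\in P_0(\T_h)$ is piecewise constant with a \emph{jump} bound $\lesssim \|h\|_{L^\infty(T)}/((D-d)H\vert_T)$, not a Lipschitz cutoff with $\|\nabla\eta\|_{L^\infty}\lesssim(\ell H)^{-1}$; the piecewise constancy is exploited in \eqref{eq:constancutoff} to move the cutoff through the convective volume term exactly, so that all commutators sit on faces. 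Your Lipschitz variant would instead leave a volume commutator $(\mathbf{b}(\nabla\eta)e,\cdot)_{L^2(\Omega)}$, which is repairable (using $\Lp e=0$ and Lemma~\ref{lem:stable} to trade $\|H^{-1}e\|_{L^2}$ for energy), but it is not what the paper does.

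The genuine gap is in your iteration: you contract the exterior energy of $e=\phi_{T,j}-\phi^L_{T,j}$, but $e$ possesses no orthogonality usable in that step. The relation $a_h(e,\cdot)=0$ holds only on $\Vfhw{\omega^L_T}$, i.e.\ against fine-scale functions supported \emph{inside} the patch, whereas the cutoff test functions needed for the annulus contraction, of the form $(\eta^m_T)^2 e - b_T$, are supported \emph{outside} an inner region and in particular extend beyond $\omega^L_T$; since $\phi^L_{T,j}$ is orthogonal only against $\Vfhw{\omega^L_T}$, the contribution $a_h(\phi^L_{T,j},(\eta^m_T)^2e-b_T)$ does not vanish and the recursion for $e$ breaks down. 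Your phrase that $e-v$ is confined to an annulus ``where $e$ is already small'' signals the circularity: smallness of $e$ near $\partial\omega^L_T$ is precisely what the lemma asserts. The paper avoids this by running the contraction on the \emph{ideal} corrector $\phi_{T,j}$ instead: for $w\in\Vfh$ supported away from $T$ one has $a_h(\phi_{T,j},w)=a_h(\lambda_{T,j},w)=0$, which is compatible with the outward-supported cutoff functions, giving the recursion \eqref{eq:decayR} anchored at $\Enormh{\phi_{T,j}-\lambda_{T,j}}$ over all of $\Omega$; only afterwards does a single C\'ea step \eqref{eq:decaySteg1} transfer the exterior smallness of $\phi_{T,j}$ (note: of $\phi_{T,j}$, not of $e$) to $\enorm{e}_h$. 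Your argument is repaired by replacing ``energy of $e$ outside $\omega^{m\ell}_T$'' with ``energy of $\phi_{T,j}$ outside $\widetilde\omega^{m}_T$'' throughout the iteration and keeping your C\'ea step as the final, single application.
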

\begin{proof}
  See Section \ref{sec:proofs}.
\end{proof}

The space of localized corrected basis function is defined by $\VmsLH:=\{\phi^L_{T,j}-\lambda_{T,j}\mid T\in\T_H,\, r=1,\dots, r\}$. The DG multiscale method reads: find $u^{ms,L}_H\in\VmsLH$ such that
\begin{equation}\label{eq:localMM}
  a_h(u^{ms,L}_H,v) = F(v)\quad\text{for all }v\in\VmsLH.
\end{equation}
An error bound for the DG multiscale method  using a localized corrected basis is given in \thmref{thm:convergence}. Note that it is only the first term $\Enormh{u-u_h}$ in \thmref{thm:convergence} that depends on the regularity of $u$.
\begin{theorem}\label{thm:convergence}
  Let $u_h\in\Vh$ and $u^{ms,L}_H\in\VmsLH$ be the solutions to \eqref{eq:dg} and \eqref{eq:localMM}, respectively. Then 
  \begin{equation}
    \begin{aligned}
      \Enormh{u-u^{ms,L}_H} \leq &\Enormh{u-u_h} + \Ccontinuous\alpha^{-1/2}\|H(f-\Lp f)\|_{L^2(\Omega)} \\
      & + C_5\|H^{-1}\|_{L^\infty(\Omega)}L^{d/2}\gamma^L\|f\|_{L^2(\Omega)}
    \end{aligned}
  \end{equation}
  holds, where $L$ is the size of the patches, $C_1$ is a constant defined in \thmref{thm:ideal}, $0<\gamma<1$ and $C_5 = C_4^{1/2}C_2\Cbas\ba$, where $C_4=\Ccontinuous^2\Ccutoff^2(1+\Cbubble\Cstable)^2$ is defined in \lemref{lem:decayFunc}, and $C_2$ and $\gamma$ are defined in \lemref{lem:decay}.
\end{theorem}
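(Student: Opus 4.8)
The plan is to reduce everything to a quasi-optimality statement for the localized Galerkin problem and then feed it a carefully built competitor. First I would peel off the unavoidable discretization error with the triangle inequality,
\begin{equation*}
  \Enormh{u-\umshHL} \leq \Enormh{u-u_h} + \Enormh{u_h-\umshHL},
\end{equation*}
so that the first term of the claim is accounted for and only the algebraic quantity $\Enormh{u_h-\umshHL}$ remains. Since $\umshHL$ is the $a_h$-Galerkin approximation of $u_h$ in $\VmsLH$, and $a_h(\cdot,\cdot)$ is coercive (through the penalty choice and the divergence-free, upwind-stabilized convection) and continuous on $\Vh$ with respect to $\Enorm{\cdot}_h$, I would use Galerkin orthogonality together with continuity to obtain the quasi-optimality
\begin{equation*}
  \Enormh{u_h-\umshHL} \lesssim \Ccontinuous\inf_{v\in\VmsLH}\Enormh{u_h-v},
\end{equation*}
which is where the constant $\Ccontinuous$ enters. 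It then suffices to supply one good competitor in $\VmsLH$.

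Next I would build that competitor from the ideal solution $\umshH$ of \eqref{eq:ideal}. Expanding it in the ideal corrected basis, $\umshH=\sum_{T,j}c_{T,j}(\lambda_{T,j}-\phi_{T,j})$, I define $\tilde u:=\sum_{T,j}c_{T,j}(\lambda_{T,j}-\phi^L_{T,j})\in\VmsLH$ by replacing each ideal corrector with its patch-localized counterpart from \eqref{eq:locaBasis}. A second triangle inequality separates the \emph{ideal} and \emph{localization} contributions,
\begin{equation*}
  \inf_{v\in\VmsLH}\Enormh{u_h-v} \leq \Enormh{u_h-\tilde u} \leq \Enormh{u_h-\umshH} + \Enormh{\umshH-\tilde u}.
\end{equation*}
For the ideal contribution I would invoke \thmref{thm:ideal} verbatim, yielding the $\alpha^{-1/2}\|H(f-\Lp f)\|_{L^2(\Omega)}$ term. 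The localization contribution is $\umshH-\tilde u=\sum_{T,j}c_{T,j}(\phi^L_{T,j}-\phi_{T,j})$, a sum of corrector differences each damped by the factor $\gamma^L$ of \lemref{lem:decay}.

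To turn this sum of locally supported terms into a global energy norm I would use a finite-overlap (coloring) argument: each patch $\omega^L_T$ meets only $\ordo{L^d}$ others, which is exactly what produces the algebraic factor $L^{d/2}$ after a Cauchy--Schwarz over the $\ordo{L^d}$ overlapping supports. The coefficient vector $(c_{T,j})$ is controlled by the data by passing through the coarse part $\Lp\umshH=\sum_{T,j}c_{T,j}\lambda_{T,j}$ (since $\Lp\phi_{T,j}=0$) and the stability estimate of \lemref{lem:stable} together with the a priori bound $\Enormh{\umshH}\lesssim\alpha^{-1/2}\|f\|_{L^2(\Omega)}$; this is where $\|H^{-1}\|_{L^\infty(\Omega)}\|f\|_{L^2(\Omega)}$ appears. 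Packaged as \lemref{lem:decayFunc}, this step delivers the constant $C_4^{1/2}$, which with \lemref{lem:bas} and the bubble estimate \lemref{lem:bubble} assembles into $C_5=C_4^{1/2}C_2\Cbas\ba$, giving the third term.

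The main obstacle is this localization step. The difficulty is twofold. First, the non-symmetry of $a_h$ induced by convection breaks the clean two-sided $a_h$-orthogonality between $\VmsH$ and $\Vfh$ enjoyed in the symmetric case, so I must track the convective contribution explicitly; it enters through $\Ccontinuous$ and through the factor $\|H\mathbf b\|_{L^\infty(\Omega)}\alpha^{-1}$ sitting inside $C_3$ and hence inside $\gamma$. Second, the summation over patches must keep the overlap constant independent of $H$ and $L$ while the per-patch decay $\gamma^L$ outruns the algebraic growth $L^{d/2}$; confirming that the smallness hypothesis \eqref{eq:size_convection} keeps $\gamma<1$ and all of $\Cstable,\Cbas,\ba,C_2$ bounded is the delicate point, and it is precisely there that the assumption on the magnitude of the convection is consumed.
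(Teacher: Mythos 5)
Your outline reproduces the paper's architecture (triangle inequality, Galerkin quasi-optimality, competitor obtained by swapping ideal correctors for localized ones, then \lemref{lem:decayFunc}, \lemref{lem:decay}, \lemref{lem:bas} and $L^2$-stability of $\Lp$ for the localization term), but it has a genuine gap at the quasi-optimality step. You assert that $a_h$ is continuous \emph{on $\Vh\times\Vh$} with constant $\Ccontinuous$ and deduce $\Enormh{u_h-\umshHL}\lesssim\Ccontinuous\inf_{v\in\VmsLH}\Enormh{u_h-v}$. That continuity claim is false: \lemref{lem:continuous} holds only on $(\Vh\times\Vfh)$ or $(\Vfh\times\Vh)$, because the factor $H$ in $\|H\mathbf{b}\|_{L^\infty(\Omega)}\alpha^{-1}$ is bought by inserting $w-\Lp w$ and invoking the approximation property \eqref{eq:Lp_approx}, which requires the fine-scale constraint $\Lp w=0$. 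On all of $\Vh$ the convective form is only continuous with a constant of order $\|\mathbf{b}\|_{L^\infty(\Omega)}\alpha^{-1}$, a factor $H^{-1}$ worse, which under \eqref{eq:size_convection} destroys exactly the rate the theorem claims. So the infimum-based quasi-optimality is unjustified; one must test against a \emph{specific} $v\in\VmsLH$ with $\Lp v=\Lp u_h$, so that $u_h-v\in\Vfh$ and \lemref{lem:continuous} applies.

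Your particular competitor fails precisely this test. You localize the ideal Galerkin solution $\umshH$, but since $a_h$ is non-symmetric the orthogonality $a_h(\VmsH,\Vfh)=0$ built into \eqref{eq:fineProjection} is one-sided, so $\umshH$ does \emph{not} coincide with the component $v^{ms}_H$ of the decomposition $u_h=v^{ms}_H+v^f$, and in general $\Lp\tilde u=\Lp\umshH\neq\Lp u_h$; hence $u_h-\tilde u\notin\Vfh$ and the continuity lemma cannot be applied to that pair either. (You correctly flag the one-sided orthogonality, but deploy it only against the localization step, where it is harmless, rather than here, where it bites.) The paper's proof avoids both problems: it decomposes $u_h=v^{ms}_H+v^f$, chooses $v^{ms,L}_H\in\VmsLH$ with $\Lp v^{ms,L}_H=\Lp v^{ms}_H=\Lp u_h$, and bounds the middle term directly by \eqref{eq:ideal1}, $\Enormh{v^f}\lesssim\alpha^{-1/2}\|H(f-\Lp f)\|_{L^2(\Omega)}$, rather than invoking \thmref{thm:ideal} verbatim --- note that your route would additionally inflate the middle constant to $\Ccontinuous C_1=\Ccontinuous^2$ rather than the single power of $\Ccontinuous$ in the statement. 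With the competitor replaced by the one matching the coarse part of $u_h$, the remainder of your argument (finite overlap giving $L^{d/2}$, coefficient control through $\Lp u_h$ and the a priori bound by $\|f\|_{L^2(\Omega)}$) is exactly the paper's.
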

\begin{proof}
  See Section \ref{sec:proofs}.
\end{proof}

\begin{remark}  \thmref{thm:convergence} is simplified to,
  \begin{equation}
    \Enormh{u-u^{ms,L}_H} \leq \Enormh{u-u_h} + C_1\|H\|_{L^\infty(\Omega)}.
  \end{equation}
  given that the patch size is chosen as $L = \lceil
  C\log(H^{-1})\rceil$ with an appropriate $C$ and $\|f\|_{L^2}=1$. In
  the numerical experiments we choose $C=2$.
\end{remark}

\begin{remark}
  If the convective term is small it is not necessary to include it in
  the computation of the correctors \cite{HMP14}. Instead the
  following correctors can be used: for all $\{T\in\T_H, j=1,\dots,r\}$ find
  $\hat\phi^L_{T,j}\in\Vfhw{\omega^L_T}$ such that
\begin{equation}\label{eq:locaBasis}
  a_h^{\mathrm{d}}(\hat\phi^L_{T,j},v) = a_h^{\mathrm{d}}(\lambda_{T,j},v),\quad\text{for all }v\in\Vfhw{\omega^L_T}.
\end{equation}
This gives the right convergence results if
\begin{equation}
  \mathcal{O}\left(\frac{\|\mathbf{b}\|_{L^\infty(\Omega)}}{\alpha}\right) = 1
\end{equation}
compared to \eqref{eq:size_convection} if the convective term is
included.
\end{remark}

\section{Numerical experiment}\label{sec:numerics}

We consider the domain $\Omega=[0,1]\times[0,1]$ and the forcing
function $f = 1+ \cos(2\pi x)\cos(2\pi y)$. The localization parameter
which determine the size of the patches is chosen as $L =\lceil
2\log(H^{-1})\rceil$, i.e., the size of the patches are
$2H\log(H^{-1})$.  Consider a coarse quadrilateral mesh, $\T_H$, of
size $H=2^{-i}$, $i=2,3,4,5$. The corrector functions are solved on
sub-grids of the quadrilateral mesh, $\T_h$, where $h=2^{-7}$.  We
consider three different permeabilities: $A_1 = 1 $, $A_2=A_2(y)$
which is piecewise constant with respect to a Cartesian grid of width
$2^{-6}$ in y-direction taking the values $1$ or $0.01$, and $A_3 =
A_3(x, y)$ which is piecewise constant with respect to a Cartesian
grid of width $2^{-6}$ both in the x- and y-directions, bounded below
by $\alpha = 0.05$ and has a maximum ratio $\beta/\alpha =
4\cdot10^{5}$. The permeability $A_3$ is taken from the $31$ layer in
the SPE 10 benchmark problem, see
\texttt{http:www.spe.org/web/csp/}. The diffusion coefficients $A_2$
and $A_3$ are illustrated in \fgref{fig:PeriodSPE}.
\begin{figure}
  \centering
  \subfigure[$A_2$]{
    \includegraphics[width=0.45\textwidth]{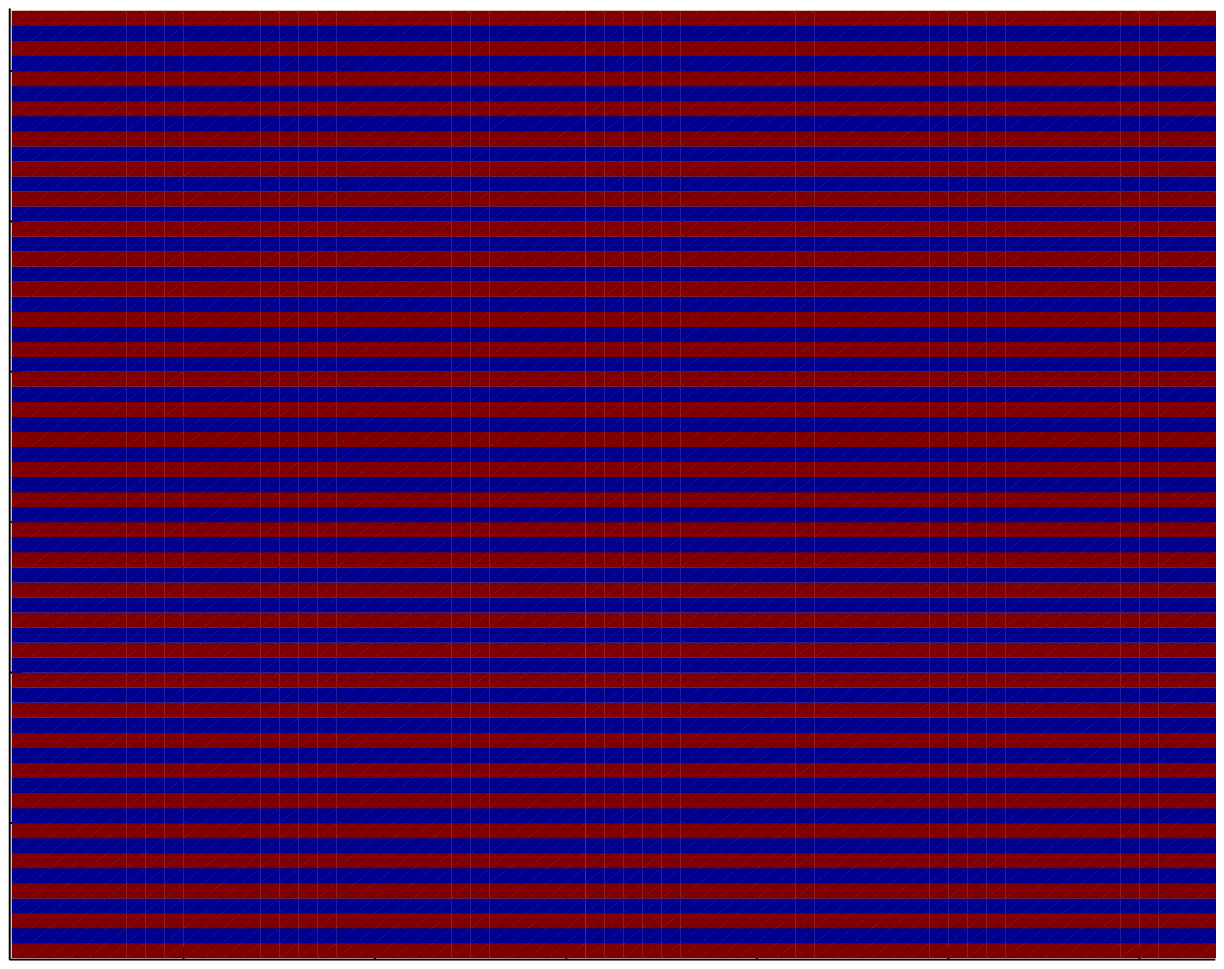}
  }
  \subfigure[$A_3$]{
    \includegraphics[width=0.45\textwidth]{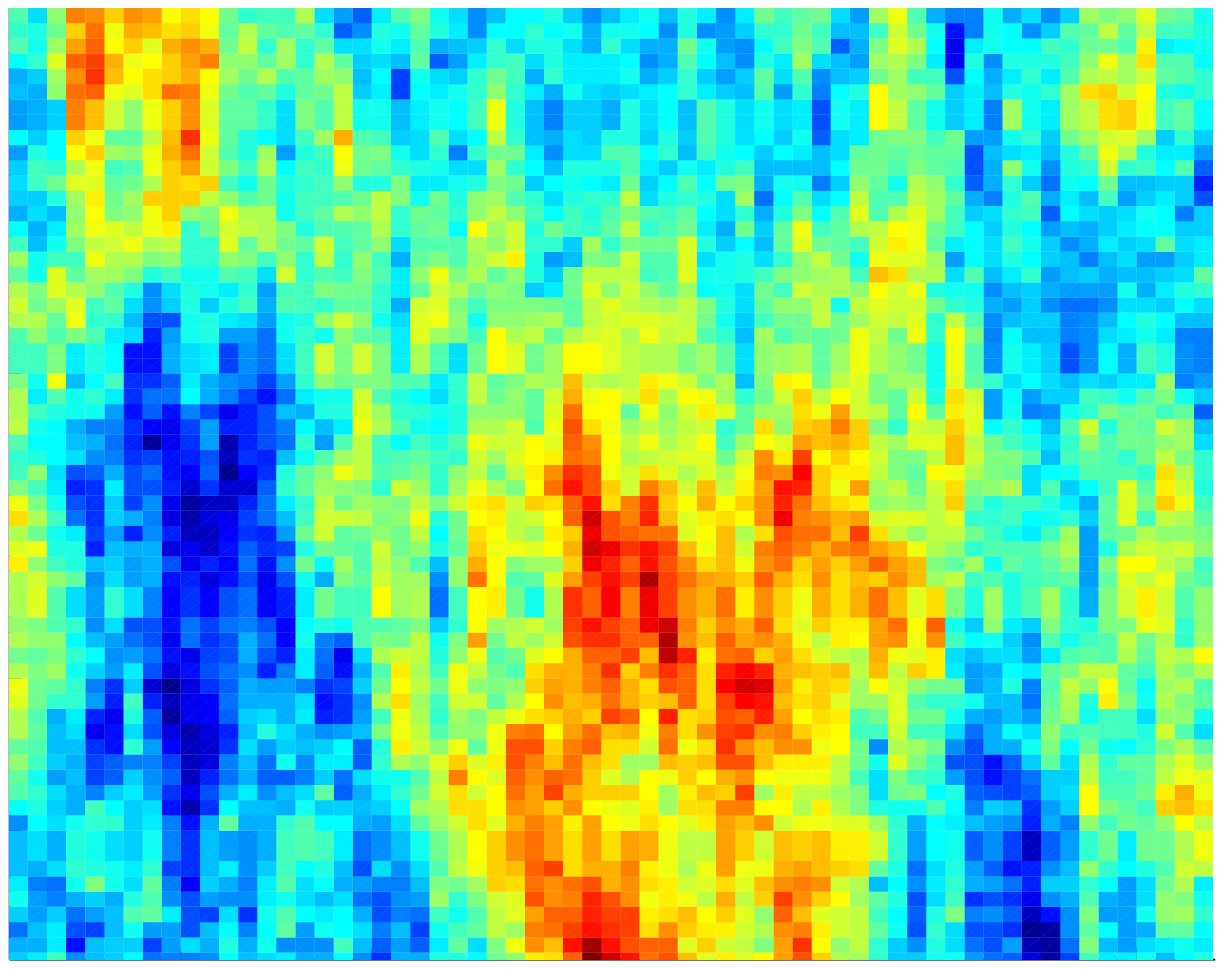}
  }
  \caption{The diffusion coefficients $A_2$ and $A_3$ in log scale.}
  \label{fig:PeriodSPE}
\end{figure}
For the convection term we consider: $\mathbf{b} = [C, 0]$, for different values of $C$. 

To investigate how the error in relative energy-norm, $\Enorm{u_h-u^{ms,L}_H}/\Enorm{u_h}$, depends on the magnitude of the convection we consider:  $A_1$ and $\mathbf{b} = [C, 0]$ with $C=\{32,64,128\}$. \fgref{fig:convection1} shows the convergence in energy-norm as a function of the coarse mesh size $H$ for the different values of $C$.
\begin{figure}[!htb]
\centering
\includegraphics[width=0.8\textwidth]{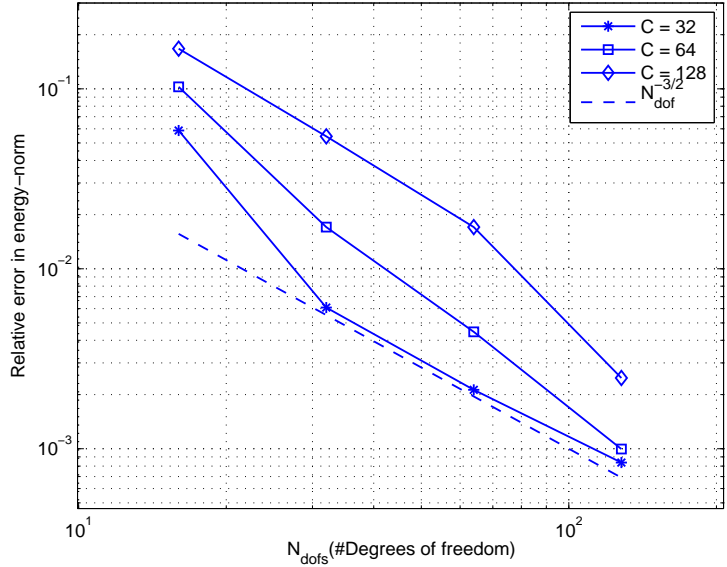}
\caption{The number degrees of freedom ($N_{dof}$) vs. the relative error in energy-norm, for different sizes of the convection term, $C$.}
\label{fig:convection1}
\end{figure}

Also, to see the effect of heterogeneous diffusion of the error in the relative energy-norm, $\Enorm{u_h-u^{ms,L}_H}/\Enorm{u_h}$,  we consider: \fgref{fig:period} which shows the error in relative energy-norm using $A_2$ and $\mathbf{b}=[1,0]$ and \fgref{fig:spe} which shows the error in relative energy-norm using $A_3$ and $\mathbf{b}=[512,0]$.
\begin{figure}[!htb]
\centering
\includegraphics[width=0.8\textwidth]{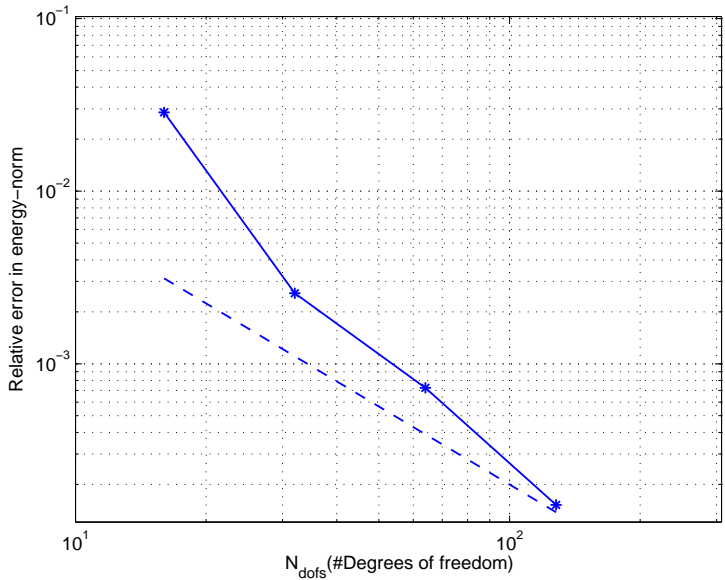}
\caption{The number degrees of freedom ($N_{dof}$) vs. the relative error in energy-norm, using a high contrast diffusion coefficients $A_2$ and $\mathbf{b}=[1,0]$. The dotted line corresponds to $N_{dof}^{-3/2}$.}
\label{fig:period}
\end{figure}
\begin{figure}[!htb]
\centering
\includegraphics[width=0.8\textwidth]{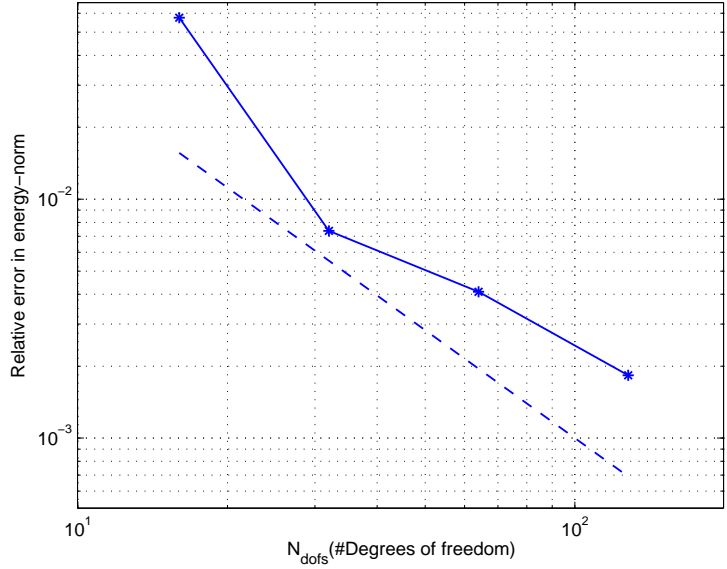}
\caption{The number degrees of freedom ($N_{dof}$) vs. the relative error in energy-norm, using a high contrast diffusion coefficients $A_3$ and $\mathbf{b}=[512,0]$. The dotted line corresponds to $N_{dof}^{-3/2}$.}
\label{fig:spe}
\end{figure}

We obtain $H^3$ convergence of the DG multiscale method to a reference solution in the relative energy-norm, $\Enorm{u_h-u^{ms,L}_H}/\Enorm{u_h}$, independent of the variation in the coefficients or regularity of the underlying solution.

\section{Proofs from Section~\ref{sec:MM}}\label{sec:proofs}
In this section we state the proofs of the main results which was
postponed from in section~\ref{sec:MM}. To this end we start by proving some
technical lemmas in Section~\ref{sec:tech_lem} which we use to prove
the main results in Section~\ref{sec:main_proof}.

\subsection{Technical lemmas}\label{sec:tech_lem}
In the proofs of the main results, \thmref{thm:ideal},
\lemref{lem:decay}, and \thmref{thm:convergence}, we will need some
definitions and technical lemmas stated below.

Continuity of the DG bilinear form for convective problems can be
proven on a orthogonal subset of $\Vh$. The space $\Vfh$ is an
orthogonal subset of $\Vh$ but on a coarse scale.
\begin{lemma}[Continuity in $(\Vh\times \Vfh)$ and $(\Vfh\times \Vh)$]\label{lem:continuous}
  For all, $(u,v)\in\Vfh\times\Vh$ or in $\Vh\times\Vfh$ , it holds
  \begin{equation}
    a(v,w) \lesssim \Ccontinuous\Enormh{v}\Enormh{w}
  \end{equation}
  where
  \begin{equation}
    \Ccontinuous = \ba+\|H\mathbf{b}\|_{L^\infty(\Omega)}\alpha^{-1}.
  \end{equation}
\end{lemma}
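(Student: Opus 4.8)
The plan is to split $a_h = a_h^{\operatorname{d}} + a_h^{\operatorname{c}}$ and estimate the two parts separately. The diffusion form $a_h^{\operatorname{d}}$ is continuous on the \emph{full} space $\Vh\times\Vh$ with constant $\lesssim\ba$ (the standard symmetric interior penalty estimate already invoked in the proof of \lemref{lem:bas}); it therefore supplies the $\ba$ term in $\Ccontinuous$ and makes no use of the orthogonality. All the difficulty, and the reason the bound only holds on the orthogonal subset, lies in the convective form: its volume term $(\mathbf{b}\cdot\nabla_h\cdot,\cdot)$ and its average edge terms pair a gradient (or a jump) against an \emph{undifferentiated} $L^2$ quantity, which the energy norm does not control on all of $\Vh$. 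The key mechanism is that whenever one argument lies in $\Vfh$, the approximation property \eqref{eq:Lp_approx} of \lemref{lem:stable} gives $\|H^{-1}w\|_{L^2(\Omega)}=\|H^{-1}(w-\Lp w)\|_{L^2(\Omega)}\lesssim\alpha^{-1/2}\Enormh{w}$, supplying exactly the missing control.

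First I would treat the case where the \emph{second} argument lies in $\Vfh$, working with $a_h^{\operatorname{c}}(v,w)$ as written. The volume term is bounded by inserting the weights $H$ and $H^{-1}$, $(\mathbf{b}\cdot\nabla_h v,w)\lesssim\|H\mathbf{b}\|_{L^\infty(\Omega)}\|\nabla_h v\|_{L^2(\Omega)}\|H^{-1}w\|_{L^2(\Omega)}$, and then using $\|\nabla_h v\|\lesssim\alpha^{-1/2}\Enormh{v}$ together with the approximation property; this produces precisely the factor $\|H\mathbf{b}\|_{L^\infty(\Omega)}\alpha^{-1}$. The jump--jump term is immediately absorbed by the convective part $\Enorma{\cdot}$ of the energy norm. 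The only delicate term is the average term $(\nu_e\cdot\mathbf{b}[v],\{w\})_e$ (and the analogous boundary term), which I would estimate by splitting the coarse weight $H^{-1}$ as $h_e^{-1/2}\cdot(h_e^{1/2}H^{-1})$: the first half joins $\|[v]\|_{L^2(e)}$ and is absorbed by the jump-penalty seminorm of $v$, while the second half joins $\{w\}$, so that after the discrete trace inequality $\|\{w\}\|_{L^2(e)}^2\lesssim h_e^{-1}\|w\|_{L^2(\omega_e)}^2$ (where $\omega_e$ is the union of elements sharing $e$) the fine-scale factor cancels and leaves the coarse quantity $\|H^{-1}w\|$, again controllable by \lemref{lem:stable}.

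For the case where the \emph{first} argument lies in $\Vfh$, the volume term differentiates the $\Vfh$-member, so the direct bound fails. Here I would integrate by parts elementwise, using the divergence-free hypothesis \eqref{eq:c} to move the gradient onto $w$: summing the elementwise identity produces $-(v,\mathbf{b}\cdot\nabla_h w)$ plus interior-edge contributions $\int_e(\mathbf{b}\cdot\nu_e)(\{v\}[w]+[v]\{w\})$ and a boundary term. The piece $\int_e(\mathbf{b}\cdot\nu_e)[v]\{w\}$ cancels exactly against the consistency term $-(\nu_e\cdot\mathbf{b}[v],\{w\})_e$ already present in $a_h^{\operatorname{c}}$, leaving only $\int_e(\mathbf{b}\cdot\nu_e)\{v\}[w]$, i.e. an average of the $\Vfh$-member against a \emph{jump} of $w$. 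The rewritten form is then estimated exactly as in the previous case with the roles of $v$ and $w$ interchanged, the undifferentiated $\Vfh$-member $v$ now being controlled through $\|H^{-1}v\|\lesssim\alpha^{-1/2}\Enormh{v}$.

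I expect the main obstacle to be the average edge terms, since they are precisely what makes the convective form non-continuous on all of $\Vh\times\Vh$: a naive trace estimate introduces a fine-scale weight $h_e^{-1/2}$ that cannot be matched to the coarse approximation property. The resolution is the weighted splitting described above, which distributes the weight so that one half is consumed by the jump penalty of the non-$\Vfh$ argument and the other half, through the trace inequality, collapses to the coarse $L^2$ norm of the $\Vfh$ argument. A secondary point I would verify carefully is the elementwise integration by parts in the second case — in particular the cancellation of the average-of-$w$ term — as this is where the divergence-free assumption \eqref{eq:c} is genuinely used; the first case requires no integration by parts at all.
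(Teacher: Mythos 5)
Your proposal is correct and takes essentially the same route as the paper's own proof: continuity of $a_h^{\operatorname*{d}}$ on all of $\Vh\times\Vh$ with constant $\ba$, the weighted $H$/$H^{-1}$ splitting of the convective volume and average-edge terms combined with a trace inequality and the approximation property \eqref{eq:Lp_approx} (exploiting $\Lp w=0$ for $w\in\Vfh$), and elementwise integration by parts via \eqref{eq:c} when the differentiated argument lies in $\Vfh$. If anything, you are more explicit than the paper, which compresses the second case into the single remark that one first integrates $(\mathbf{b}\cdot\nabla u,v)_{L^2(T)}$ by parts, whereas you spell out the cancellation of the $[v]\{w\}$ edge contribution against the consistency term.
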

\begin{proof}
  Since $a^d_h$ is continuous in $(\Vh\times \Vh)$ with the constant
  $\ba$, continuity in $(\Vfh\times \Vh)$ follows from $\Vfh\subset
  \Vh$. For the convective part $a^{\operatorname*{c}}_h$, we have
  \begin{equation}
    \begin{aligned}
      a^{\operatorname*{c}}(v,w)& = \sum_{T\in \T_h}(\mathbf{b}\cdot\nabla v,w)_{L^2(T)}+\sum_{e\in\E_k}(b_e[v],[w])_{L^2(e)} \\ 
      &\qquad-\sum_{e\in\E_k(\Omega)}(\nu_e\cdot\mathbf{b}[v],\{w\})_{L^2(e)} +\sum_{e\in\E_k(\Gamma)}((\nu_e\cdot\mathbf{b})^\ominus v,w)_{L^2(e)} \\
      & \lesssim \sum_{T\in \T_h}\left( \|\mathbf{b}\|_{L^\infty(T)}\|\nabla v\|_{L^2(T)}\|w-\Lp w\|_{L^2(T)}\right)\hspace{-5pt}\\ 
      &\qquad+\sum_{e\in\E_k}\Big( \|\mathbf{b}\|_{L^\infty(e)}h^{-1/2}
\|[v]\|_{L^2(e)}\|w\|_{L^2(S^+\cup S^-)}\Big).
    \end{aligned}
  \end{equation}    
  where $S^+,S^-\in\T_h$ and $e=S^+\cap S^-$. Using a discrete
  Cauchy-Schwartz inequality and summing over the coarse elements, we
  get
  \begin{equation}
    \begin{aligned}
    a^{\operatorname*{c}}(v,w)  & \lesssim \alpha^{-1/2}\|H\mathbf{b}\|_{L^\infty(\Omega)}\Enormh{v}\|H^{-1}(w-\Lp w)\|_{L^2(\Omega)}, \\
      & \lesssim \|H\mathbf{b}\|_{L^\infty(\Omega)}\alpha^{-1}\Enormh{ v}\Enormh{ w},
    \end{aligned}
  \end{equation}
  which concludes the proof for $(\Vh\times\Vfh)$. The proof of $(\Vfh\times\Vh)$ is obtained by first integrating $(\mathbf{b}\cdot\nabla u,v)_{L^2(T)}$ by parts.
\end{proof}

The following cut-off function will be frequently used in the proof of
the main results.
\begin{definition}\label{def:cutoff}
  The function $\zeta^{d,D}\in P_o(\T_h)$, for $D>d$, is a cut off
  function fulfilling the following condition
  \begin{equation}
    \begin{aligned}
      \zeta^{d,D}_T\vert_{\omega^d_T} &= 1, \\
      \zeta^{d,D}_T\vert_{\Omega\setminus\omega^D_T} &= 0, \\
      \| [\zeta^{d,D}_T] \|_{L^\infty(\E_h(T))} &\lesssim \frac{\|h\|_{L^\infty(T)}}{(D-d)H\vert_T},
    \end{aligned}
  \end{equation}
  and $||[\zeta^{d,D}]||_{L^\infty(\partial(\omega^D_T\setminus\omega^d_T))} = 0$, for all $T\in\T_H$.
\end{definition}

For the cut off function has the following stability property.
\begin{lemma} \label{lem:cutoff}
  For any $v\in\V_h$ and $\zeta^{d,D}_T$ from \defref{def:cutoff}, the estimate,
  \begin{equation}
    \Enormh{\zeta^{d,D}_Tv} \lesssim \Ccutoff\Enormhw{v}{\omega^D_T},
  \end{equation}
holds, where $\Ccutoff=(\ba^2+\|h\mathbf{b}\|_{L^\infty(\Omega)}/\alpha)^{1/2}$.
\end{lemma}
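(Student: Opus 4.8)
The plan is to expand $\Enormh{\zeta^{d,D}_T v}^2$ into its diffusion-volume, diffusion-jump and convection-jump parts and bound each by the localized norm on $\omega^D_T$. Since $\zeta^{d,D}_T\in P_0(\T_h)$ is piecewise constant on the fine mesh, the broken gradient satisfies $\nabla_h(\zeta^{d,D}_T v)=\zeta^{d,D}_T\nabla_h v$ elementwise; using $0\le\zeta^{d,D}_T\le 1$ and $\supp(\zeta^{d,D}_T)\subset\omega^D_T$, the volume contribution $\|A^{1/2}\nabla_h(\zeta^{d,D}_T v)\|_{L^2(\Omega)}^2$ is immediately dominated by $\|A^{1/2}\nabla_h v\|_{L^2(\omega^D_T)}^2$, with constant one.

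For the edge terms I would use the jump product rule $[\zeta^{d,D}_T v]=\{\zeta^{d,D}_T\}[v]+[\zeta^{d,D}_T]\{v\}$ on every $e\in\E_h$. The first summand is harmless: since $\|\{\zeta^{d,D}_T\}\|_{L^\infty(e)}\le 1$ and only edges meeting $\bar\omega^D_T$ contribute, both the $\sigma_e/h$-weighted and the $b_e$-weighted contributions of $\{\zeta^{d,D}_T\}[v]$ are bounded directly by $\Enormhw{v}{\omega^D_T}^2$. The work is in the second summand $[\zeta^{d,D}_T]\{v\}$, which is supported on the annulus $\omega^D_T\setminus\omega^d_T$ (there $[\zeta^{d,D}_T]=0$ on $\omega^d_T$, where $\zeta^{d,D}_T\equiv 1$, and outside $\omega^D_T$). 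Here I insert the jump bound $\|[\zeta^{d,D}_T]\|_{L^\infty(e)}\lesssim\|h\|_{L^\infty(T)}/((D-d)H\vert_T)$ from \defref{def:cutoff} together with a fine-mesh trace inequality $\|\{v\}\|_{L^2(e)}^2\lesssim h^{-1}\|v\|_{L^2(T')}^2+h\|\nabla v\|_{L^2(T')}^2$ on the adjacent fine elements $T'$.

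Collecting the resulting powers of $h/H$ is where the two pieces of $\Ccutoff$ appear. In the diffusion-jump term the factor $\sigma_e h^{-1}\,h^2/((D-d)^2H^2)$ multiplies the trace bound: the gradient piece produces $\sigma_e h^2 H^{-2}\|\nabla v\|^2\lesssim\ba^2\|A^{1/2}\nabla v\|^2$ after using $\sigma_e\sim\beta$, $h\le H$ and $\alpha\|\nabla v\|^2\le\|A^{1/2}\nabla v\|^2$, while the $L^2$ piece produces $\beta(D-d)^{-2}\|H^{-1}v\|_{L^2(\omega^D_T)}^2$. Writing $b_e\lesssim\|\mathbf b\|_{L^\infty(e)}$ in the convection-jump term and balancing the powers of $h$ turns the two trace contributions into the fine-scale weights $\|h\mathbf b\|_{L^\infty(\Omega)}\alpha^{-1}\Enormhw{v}{\omega^D_T}^2$ and $\|h\mathbf b\|_{L^\infty(\Omega)}\|H^{-1}v\|_{L^2(\omega^D_T)}^2$ respectively.

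The main obstacle, and the step that fixes the constant, is controlling the remaining $\|H^{-1}v\|_{L^2(\omega^D_T)}$. This cannot be bounded by the energy seminorm for an arbitrary $v\in\Vh$ (a nonzero constant is a counterexample), so I would invoke the approximation property of \lemref{lem:stable}, namely $H\vert_T^{-1}\|v-\Lp v\|_{L^2(T)}\lesssim\alpha^{-1/2}\Enormhw{v}{T}$, summed over the coarse elements in $\omega^D_T$; since the lemma is applied to the fine-scale functions of \eqref{eq:idealBasis} and \eqref{eq:locaBasis}, for which $\Lp v=0$, this yields $\|H^{-1}v\|_{L^2(\omega^D_T)}\lesssim\alpha^{-1/2}\Enormhw{v}{\omega^D_T}$. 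Substituting back converts the two leftover terms into $\ba^2\Enormhw{v}{\omega^D_T}^2$ and $\|h\mathbf b\|_{L^\infty(\Omega)}\alpha^{-1}\Enormhw{v}{\omega^D_T}^2$; since $\ba^2\ge 1$ absorbs the volume term and the $(D-d)^{-2}$ factors only help, summing the three contributions gives $\Enormh{\zeta^{d,D}_T v}^2\lesssim(\ba^2+\|h\mathbf b\|_{L^\infty(\Omega)}\alpha^{-1})\Enormhw{v}{\omega^D_T}^2=\Ccutoff^2\Enormhw{v}{\omega^D_T}^2$, which is the claim after taking square roots.
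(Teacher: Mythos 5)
Your proof is correct and takes essentially the same route as the paper's: the paper also splits off the diffusion part (citing \cite{EGMP13} for the bound $\Enormd{(1-\zeta^{d,D}_T)v} \lesssim \ba \Enormhw{v}{\Omega\setminus\omega_T^{L-1}}$, which you instead prove directly by the identical technique) and handles the convective jumps via the product rule $[\zeta v]=\{\zeta\}[v]+[\zeta]\{v\}$, the jump bound on $\zeta^{d,D}_T$, a trace inequality, and the approximation property of $\Lp$ to absorb $\|H^{-1}v\|_{L^2}$. Your explicit observation that this last step requires $\Lp v=0$ (so the lemma really holds on $\Vfh$, the only setting in which it is used) corresponds to the paper's silent replacement of $\{v\}$ by $v-\Lp v$ in its chain of inequalities, so on that point you are more careful than the original.
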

\begin{proof}
  For the diffusion part we use the following result from \cite{EGMP13},
  \begin{equation}\label{eq:cuttoff1}
    \Enormd{(1-\zeta^{d,D}_T)v} \lesssim \ba \Enormhw{v}{\Omega\setminus\omega_T^{L-1}}
  \end{equation}
  and focus on the convective part. We obtain
  \begin{equation}\label{eq:cuttoff2}
    \begin{aligned}
      &\Enorma{(1-\zeta^{d,D}_T)v}^2 \\
      &= \sum_{e\in\E_h}\|b_e^{1/2}[(1-\zeta^{d,D}_T)v]\|^2_{L^2(e)} \\
      & \leq \sum_{\substack{e\in\E_h:\\ e\cap\omega^{L-1}_T\neq 0}}\left(\|b_e^{1/2}[v]\|^2_{L^2(e)}+\|h\|_{L^\infty(S^+\cup S^-)}^2 \|H^{-1}b_e^{1/2}\{v\}\|^2_{L^2(e)}\right) \\
      & \lesssim \sum_{\substack{T\in\T_H:\\ e\cap\omega^{L-1}_T\neq 0}}\hspace{-5pt}\|h\mathbf{b}\|_{L^\infty(T)}\left(\|h^{-1/2}[v]\|^2_{L^2(e)}+ \|H^{-1}(v-\Lp v)\|^2_{L^2(T)}\right) \\
      & \lesssim \frac{\|h\mathbf{b}\|_{L^\infty(\Omega)}}{\alpha} \Enormhw{v}{\Omega\setminus\omega^{L-1}}^2,
    \end{aligned}
  \end{equation}
  using $[vw]=\{v\}[w]+\{w\}[v]$, the triangle inequality, and a trace
  inequality. The proof is concluded using \eqref{eq:cuttoff1} and
  \eqref{eq:cuttoff2}.
\end{proof}

The following lemmas will be necessary in order to prove
\thmref{thm:convergence}.
\begin{lemma}\label{lem:decayFunc} The following estimate, 
  \begin{equation}
    \Enormh{\sum_{T\in\T_H,\, j=1,\dots,r} v_j(\phi_{T,j}-\phi^L_{T,j})}^2 \lesssim C_4L^{d}\sum_{T\in\T_H,\, j=1,\dots,r} |v_j|^2\Enormh{\phi_{T,j}-\phi^L_{T,j}}^2,
  \end{equation}
holds, where $C_4=\Ccontinuous^2\Ccutoff^2(1+\Cbubble\Cstable)^2$.
\end{lemma}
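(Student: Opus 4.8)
The plan is to test the sum against itself through the bilinear form. Writing $w=\sum_{T,j}v_j e_{T,j}$ with $e_{T,j}:=\phi_{T,j}-\phi^L_{T,j}\in\Vfh$, I would first pass to $a_h$ by coercivity in the energy norm: the interior penalty diffusion part is coercive once $\sigma_e$ is large, while the upwind convective part is nonnegative on the diagonal because $\mathbf b$ is divergence free, so the volume term $(\mathbf b\cdot\nabla_h w,w)$ collapses to boundary contributions and the edge terms reproduce $\Enorm{w}^2_{h,\operatorname*{c}}$. Hence $\Enormh{w}^2\lesssim a_h(w,w)=\sum_{T,j}v_j\,a_h(e_{T,j},w)$, and the whole estimate reduces to controlling each pairing $a_h(e_{T,j},w)$ by $\Enormh{e_{T,j}}$ times a genuinely \emph{local} share of the energy of $w$.

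The heart of the argument is a per-term localized continuity bound
\[
  a_h(e_{T,j},w)\lesssim \Ccontinuous\Ccutoff(1+\Cbubble\Cstable)\,\Enormh{e_{T,j}}\,\Enormhw{w}{\omega^{L+1}_T}.
\]
Its point is that although $e_{T,j}$ is only quasi-local, the form pairs it with $w$ essentially through a neighbourhood of the patch $\omega^L_T$ only. To extract this I would use the Galerkin orthogonality $a_h(e_{T,j},v)=0$ for all $v\in\Vfhw{\omega^L_T}$ — valid because $\phi_{T,j}$ and $\phi^L_{T,j}$ solve the same corrector problem \eqref{eq:idealBasis} against such $v$ — to subtract from $w$ a fine-scale function supported in the patch, and then bound the remaining interaction, which is concentrated in the transition region, by the patch-local energy of $w$. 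Concretely I would cut off with $\zeta=\zeta^{L-1,L}_T$ from \defref{def:cutoff}, restore membership in $\Vfh$ by the bubble of \lemref{lem:bubble} (whose $\Lp$-image equals $\Lp(\zeta w)$, supported only in the transition layer since $\Lp w=0$ where $\zeta\equiv 1$ and $\zeta w=0$ far out), and estimate that bubble through \lemref{lem:stable}; continuity from \lemref{lem:continuous} then contributes the factor $\Ccontinuous$, the cut-off stability \lemref{lem:cutoff} the factor $\Ccutoff$, and the bubble correction the factor $1+\Cbubble\Cstable$, which together account exactly for the constant $C_4=\Ccontinuous^2\Ccutoff^2(1+\Cbubble\Cstable)^2$.

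With the per-term bound in hand the conclusion is routine: a discrete Cauchy--Schwarz inequality over the index set $\{T,j\}$ separates the two factors,
\[
  \sum_{T,j}v_j\,a_h(e_{T,j},w)\lesssim \Ccontinuous\Ccutoff(1+\Cbubble\Cstable)\Big(\sum_{T,j}|v_j|^2\Enormh{e_{T,j}}^2\Big)^{1/2}\Big(\sum_{T,j}\Enormhw{w}{\omega^{L+1}_T}^2\Big)^{1/2},
\]
and the finite-overlap property of the patches — each coarse element lies in $\omega^{L+1}_T$ for only $\mathcal{O}(L^d)$ indices $T$, with $r=\mathcal{O}(1)$ choices of $j$ — yields $\sum_{T,j}\Enormhw{w}{\omega^{L+1}_T}^2\lesssim L^d\,\Enormh{w}^2$. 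Combining this with the coercivity step, cancelling one factor of $\Enormh{w}$ and squaring gives the claim. I expect the main obstacle to be precisely the per-term localization: arranging the cut-off, the bubble correction, and the orthogonality so that the surviving factor is the \emph{local} patch norm $\Enormhw{w}{\omega^{L+1}_T}$ — which is what makes the overlap count produce the benign polynomial factor $L^d$ rather than a global complementary energy (which would only yield decay) — while keeping $\Enormh{e_{T,j}}$ intact on the other side. The delicate bookkeeping is the tracking of supports in the transition layer of $\zeta$ and the verification that the corrected function indeed lands in $\Vfhw{\omega^L_T}$.
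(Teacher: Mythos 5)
Your overall skeleton---coercivity $\Enormh{w}^2\lesssim a_h(w,w)$, a per-term localized continuity bound, discrete Cauchy--Schwarz, and the $\mathcal{O}(L^d)$ finite-overlap count, with the constants assembling to $C_4=\Ccontinuous^2\Ccutoff^2(1+\Cbubble\Cstable)^2$---is exactly the structure of the proof this paper invokes (it defers to Lemma~12 of \cite{EGMP13}). However, the mechanism you propose for the crucial per-term bound does not work, and the failure is not cosmetic. You subtract $v=\zeta^{L-1,L}_T w-b\in\Vfhw{\omega^L_T}$ and use $a_h(e_{T,j},v)=0$, which is correct as far as it goes; but what survives is $a_h\bigl(e_{T,j},(1-\zeta^{L-1,L}_T)w+b\bigr)$, and $(1-\zeta^{L-1,L}_T)w$ is supported on $\Omega\setminus\omega^{L-1}_T$, i.e.\ essentially the whole domain---it is \emph{not} ``concentrated in the transition region'' (only the bubble $b$ lives there; the identity $\Lp w=0$ localizes the $\Lp$-image of $\zeta w$, not $(1-\zeta)w$ itself). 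Continuity then yields at best
\begin{equation*}
  a_h(e_{T,j},w)\lesssim \Ccontinuous\Ccutoff(1+\Cbubble\Cstable)\,\Enormh{e_{T,j}}\,\Enormhw{w}{\Omega\setminus\omega^{L-2}_T},
\end{equation*}
a \emph{complement} norm, and summing $\Enormhw{w}{\Omega\setminus\omega^{L-2}_T}^2$ over $T\in\T_H$ produces a factor proportional to the number of coarse elements, of order $\|H^{-1}\|_{L^\infty(\Omega)}^{d}$, instead of $L^d$. In short, the orthogonality against $\Vfhw{\omega^L_T}$ removes the \emph{near} field and leaves the \emph{far} field---the opposite of what your overlap count needs.

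The repair is to cut off in the other direction and kill the far field using the locality of $a_h$ together with \emph{both} corrector equations, rather than the orthogonality against $\Vfhw{\omega^L_T}$. With $\eta=\zeta^{L+1,L+2}_T$ (indices shifted by a fixed number of layers where needed), set $v:=(1-\eta)w-b\in\Vfh$, where the bubble $b$ from \lemref{lem:bubble} corrects $\Lp((1-\eta)w)$; this $\Lp$-image is supported in the annulus $\omega^{L+2}_T\setminus\omega^{L+1}_T$ precisely because $\Lp w=0$, so $v$ vanishes on $\omega^{L}_T$ and a surrounding layer of elements. Then $a_h(\phi_{T,j},v)=a_h(\lambda_{T,j},v)=0$, since $v\in\Vfh$, $\lambda_{T,j}$ is supported in $T$, and the DG form couples only face-adjacent elements; likewise $a_h(\phi^L_{T,j},v)=0$ by support separation, since $\supp(\phi^L_{T,j})\subset\bar\omega^L_T$ and $v$ vanishes on the adjacent element layer. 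Hence $a_h(e_{T,j},w)=a_h(e_{T,j},\eta w+b)$ with $\eta w+b$ genuinely local, and \lemref{lem:continuous}, \lemref{lem:cutoff}, \lemref{lem:stable}, and \lemref{lem:bubble} give the factor $\Enormhw{w}{\omega^{L+2}_T}$ and the constant $\Ccontinuous\Ccutoff(1+\Cbubble\Cstable)$; after that, your Cauchy--Schwarz and $\mathcal{O}(L^d)$ overlap argument go through verbatim and yield the stated $C_4$.
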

\begin{proof}
  The proof is analogous with the proof of Lemma 12 in \cite{EGMP13}.
\end{proof}

\subsection{Proof of main results}\label{sec:main_proof}

We are now ready to prove, \thmref{thm:ideal}, \lemref{lem:decay}, and
\thmref{thm:convergence}.
\begin{proof}[\thmref{thm:ideal}]
  Let us decompose $u_h$ into a coarse contribution, $v^{ms}_H\in\VmsH$, and a fine scale remainder, $v^f\in\Vfh$, i.e., $u_h = v^{ms}_H + v^f$. For $v^f$ we have
  \begin{equation}\label{eq:ideal1}
    \begin{aligned}
      \Enormh{v^f}^2 &\lesssim a_h(v^f,v^f)= a_h(u_h,v^f) = (f,v^f)_{L^2(\Omega)}\\
      & = (f-\Lp f,v^f-\Lp v^f)_{L^2(\Omega)}  \\
      & \leq \| H(f-\Lp f)\|_{L^2(\Omega)}\|H^{-1}(v^f-\Lp v^f)||_{L^2(\Omega)} \\
      & \leq \alpha^{-1/2}\| H(f-\Lp f)\|_{L^2(\Omega)}\Enormh{v^f}.
    \end{aligned}
  \end{equation}
  Using continuity, we get
  \begin{equation}
    \begin{aligned}
    \Enormh{u_h - u^{ms}_H}^2 &\lesssim a_h(u_h - u^{ms}_H,u_h - u^{ms}_H)=a_h(u_h - u^{ms}_H,u_h - v^{ms}_H) \\
    & \lesssim \Ccontinuous \Enormh{u_h - u^{ms}_H}\Enormh{u_h - v^{ms}_H},
    \end{aligned}
  \end{equation}
  which concludes the proof together with \eqref{eq:ideal1}.
\end{proof}

\begin{proof}[\lemref{lem:decay}]
  Define $e:=\phi_{T,j}-\phi_{T,j}^{L}$ where
  $\phi_{T,j}\in\Vfh$ and $\phi_{T,j}^L\in\Vfhw{\omega^L_T}$. We have
  \begin{equation}
      \enorm{e}^2_h \lesssim  a_h(e,\phi_{T,j}-\phi_{T,j}^{L}) = a_h(e,\phi_{T,j}-v)  \lesssim \Ccontinuous\Enormh{e}\Enormh{\phi_{T,j}-v}.
  \end{equation}
  Furthermore from \lemref{lem:bubble}, there exist a $v = \zeta^{L-1,L}_T \phi_{T,j} - b_T\in\Vfhw{\omega^{L}_T}$ such that $\Lp b_T = \Lp (\zeta^{L-1,L}_T \phi_{T,j})$ and $\Enormh{b_T}\lesssim\Cbubble\EnormH{\Lp(\zeta^{L-1,L}_T \phi_{T,j})}$, we have
  \begin{equation}
    \begin{aligned}\label{eq:decaySteg1_1}
      \enorm{e}_h \lesssim \Ccontinuous\left(\Enormh{(1-\zeta^{L-1,L}_T) \phi_{T,j}}+\Enormh{b_T}\right),
    \end{aligned}
  \end{equation}
  where
  \begin{equation}\label{eq:decaySteg1_2}
    \begin{aligned}
      \Enormh{b_T} & \lesssim \Cbubble \EnormH{\Lp\zeta^{L-1,L}_T \phi_{T,j} } = \Cbubble\EnormH{\Lp(1-\zeta^{L-1,L}_T)\phi_{T,j} } \\
      & \lesssim \Cbubble\Cstable \Enormh{(1-\zeta^{L-1,L}_T)\phi_{T,j} } \lesssim \Cbubble\Cstable\Ccutoff \Enormhw{\phi_{T,j}}{\Omega\setminus\omega^{L-1}_T}.
    \end{aligned}
  \end{equation}
  using \lemref{lem:bubble}, \lemref{lem:stable}, and \lemref{lem:cutoff}.
  We obtain,
  \begin{equation}\label{eq:decaySteg1}
    \enorm{e}_h \lesssim C_2 \Enormhw{\phi_{T,j}}{\Omega\setminus\omega^{L-1}_T},
  \end{equation}
  where $C_2 = \Ccontinuous\Ccutoff(1+\Cbubble\Cstable)$ from \eqref{eq:decaySteg1_1} and \eqref{eq:decaySteg1_2}.
  
  The next step in the proof is to construct a recursive relation
  which will be used to prove the decay of the correctors.  To this
  end, let $\ell k = L-1$, and define another the cut off function,
  $\eta_T^m:=(1-\zeta^{\ell (k-m-1)-m,\ell (k - m)-m})$ and the patch
  $\widetilde\omega^m_T:=\omega_T^{\ell(k-m+1)-m}$, for $m =
  0,1,\dots, \lfloor\ell k/(\ell+1)-1\rfloor$. Note that
  $\widetilde\omega^{m+1}_T\subset\widetilde\omega^m_T$. We obtain
  \begin{equation}
    \begin{aligned}
      \Enormhw{\phi_{T,j}}{\Omega\setminus\widetilde\omega^m_T} \leq\Enormh{\eta_T^m\phi_{T,j}}\lesssim a_h(\eta^m_T\phi_{T,j},\eta^m_T\phi_{T,j}).
    \end{aligned}
  \end{equation}
  To shorten the proof we refer to the following inequality
  \begin{equation}\label{eq:decayD}
    a^{\operatorname*{d}}(\eta^m_T\phi_{T,j},\eta^m_T\phi_{T,j}) \lesssim a^{\operatorname*{d}}(\phi_{T,j},(\eta^m_T)^2\phi_{T,j}-b_T) + \frac{\ba^2}{\ell}\Enormhw{\phi_{T,j}}{\widetilde\omega^{m}_T\setminus\widetilde\omega^{m+1}_T}^2.
  \end{equation}
  where $(\eta^m_T)^2\phi_{T,j}- b_T\in \Vfh$, in the proof of Lemma 10 in \cite{EGMP13}.
  We focus on the convection term,
  since the cut of function is piecewise constant it follows that
  \begin{equation}\label{eq:constancutoff}
    \begin{aligned}
      (\mathbf{b}\cdot\nabla \eta^m_T\phi_{T,j},\eta^m_T\phi_{T,j})_{L^2(S)}  =  (\mathbf{b}\cdot\nabla \phi_{T,j},(\eta^m_T)^2\phi_{T,j})_{L^2(S)}
    \end{aligned}
  \end{equation}
  for all $S\in\T_h$. Using the following equalities from (Appendix A in \cite{EGMP13})
  \begin{equation}
    \begin{aligned}
      \{vw\}[vw] &= \{w\}[v^2w]-[v]\{w\}\{v\}\{w\}+1/4[v]\{v\}[w][w],\\
      [vw][vw] & = [w][v^2w] - 1/4[v]^2[w]^2 + [v]^2\{w\}^2,
    \end{aligned}
  \end{equation} 
  and  \eqref{eq:constancutoff}, we obtain
  \begin{equation}\label{eq:stepontheway}
    \begin{aligned}
      &a^{\operatorname*{c}}(\eta^m_T\phi_{T,j},\eta^m_T\phi_{T,j}) =  a^{\operatorname*{c}}(\phi_{T,j},(\eta^m_T)^2\phi_{T,j}) \\
      & + \sum_{\substack{e\in\E_h(\Omega)}}\Big( (\nu_e\cdot\mathbf{b}[\eta^m_T]\{\phi_{T,j}\},\{\eta^m_T\}\{\phi_{T,j}\})_{L^2(e)} \\
      &\qquad \qquad-1/4(\nu_e\cdot\mathbf{b}[\eta^m_T]\{\phi_{T,j}\},\{\eta^m_T\}[\phi_{T,j}])_{L^2(e)} \\
      &\qquad \qquad -1/4(b_e[\eta^m_T]^2,[\phi_{T,j}]^2)_{L^2(e)}+(b_e[\eta^m_T]^2,\{\phi_{T,j}\}^2)_{L^2(e)}\Big).
    \end{aligned}
  \end{equation}
  The sum over the edges terms can be bounded using that
  $\|[\eta^m_T]\|_{L^\infty(T)}\lesssim \|h\|_{L^\infty(T)}/H\vert_T$,
  $\|\{\eta^m_T\}\|_{L^\infty(\Omega)}\lesssim 1$,
  $\|h\|_{L^\infty(T)}/H\vert_T\ell<1$, and a trace inequality. We obtain
  \begin{equation}\label{eq:decay2}
    \begin{aligned}
      &\sum_{\substack{e\in\E_h(\Omega):\\e\cap(\widetilde\omega^{m}_T\setminus\widetilde\omega_T^m)\neq 0}}\frac{\|H^{-1}\mathbf{b}\|_{L^\infty(e)}}{\ell}\bigg(\|h^{1/2}\{\phi_{T,j}\}\|_{L^2(e)}\|h^{1/2}\{\phi_{T,j}\}\|_{L^2(e)}+\\
      &\qquad\|h^{1/2}\{\phi_{T,j}\}\|_{L^2(e)}\|h^{1/2}[\phi_{T,j}]\|_{L^2(e)}  + \|h^{1/2}[\phi_{T,j}]\|^2_{L^2(e)}  \\ &\qquad+\|h^{1/2}\{\phi_{T,j}\}\|^2_{L^2(e)}\bigg)\\
      &\lesssim\sum_{\substack{e\in\E_H(\Omega):\\e\cap(\widetilde\omega^{m}_T\setminus\widetilde\omega_T^m)\neq 0}}\frac{\|H^{-1}\mathbf{b}\|_{L^\infty(e)}}{\ell}\|\phi_{T,j}\|_{L^2(T^+\cup T^-)}^2 \\
      &\lesssim\sum_{\substack{T\in\T_H:\\T\cap(\widetilde\omega^{m}_T\setminus\widetilde\omega_T^m)\neq 0}}\frac{\|H\mathbf{b}\|_{L^\infty(T)}}{\ell}\|H^{-1}(\phi_{T,j}-\Lp\phi_{T,j})\|_{L^2(T)}^2 \\
      &\lesssim \frac{\|H\mathbf{b}\|_{L^\infty(\Omega)}}{\ell\alpha}\Enormhw{\phi_{T,j}}{(\widetilde\omega^{m}_T\setminus\widetilde\omega_T^{m+1})}^2.
    \end{aligned}
  \end{equation}
  Combining the results, we have
  \begin{equation}
    \begin{aligned}
      &\Enormhw{\phi_{T,j}}{\Omega\setminus\widetilde\omega^m_T}^2 \lesssim a(\phi_{T,j},(\eta^m_T)^2\phi_{T,j}-b_T) +  a(\phi_{T,j},b_T) \\
      & \qquad+ \ell^{-1}\left(\ba^2 +\frac{\|H\mathbf{b}\|_{L^\infty(\Omega)}}{\alpha} \right)\Enormhw{\phi_{T,j}}{(\widetilde\omega^{m}_T\setminus\widetilde\omega_T^{m+1})}^2 ,
    \end{aligned}
  \end{equation}
  where $b_T$ has support in
  $\widetilde\omega^m_T\setminus\widetilde\omega^{m+1}_T$, such that
  $(\eta^m_T)^2\phi_{T,j}-b_T\in\Vfh$ and
  $\Enormh{b_T}\lesssim\Cbubble \EnormH{\Lp
    ((\eta^m_T)^2\phi_{T,j})}$, see \lemref{lem:bubble}. We have
  \begin{equation}
    a(\phi_{T,j},(\eta^m_T)^2\phi_{T,j}-b_T) = 0.
  \end{equation}
  For all $T\in\T_H$ the operator $\Lp$ is stable in the
  $L^2(T)$-norm, we have
  \begin{equation}\label{eq:decay4}
    \begin{aligned}
      &\Enormhw{b_T}{\widetilde\omega^{m}_T\setminus\widetilde\omega^{m+1}_T}^2 \lesssim \Cstable^2\EnormH{\Lp((\eta^m_T)^2\phi_{T,j})}^2 \\
      &=\Cstable^2\left(\EnormdH{\Lp((\eta^m_T)^2\phi_{T,j})}^2+\EnormaH{\Lp((\eta^m_T)^2\phi_{T,j})}^2\right).
    \end{aligned}
  \end{equation}
  For the first term in \eqref{eq:decay4} we refer to the result
  \begin{equation}
    \EnormdH{\Lp((\eta^m_T)^2\phi_{T,j})}^2 \lesssim \frac{\ba^2}{\ell^2}\Enormhw{\phi_{T,j}}{\widetilde\omega^{m}_T\setminus\widetilde\omega^{m+1}_T}^2,
  \end{equation}
  from \cite{EGMP13} and for the second them we have
  \begin{equation}
    \begin{aligned}
      &\EnormaH{\Lp((\eta^m_T)^2\phi_{T,j})}^2=\EnormaH{\Lp((\eta^m_T-\Pi_0\eta^m_T)^2\phi_{T,j})}^2 \\
      & = \sum_{\substack{e\in\E_H(\Omega)}} \|b_e^{1/2}[\Lp((\eta^m_T)^2-\Pi_0(\eta^m_T)^2)\phi_{T,j})]\|^2_{L^2(e)} \\     
      & = \sum_{T\in\T_H} \|H^{-1}\mathbf{b}\|_{L^\infty(T)}\|(\eta^m_T)^2-\Pi_0(\eta^m_T)^2\|^2_{L^\infty(T)}\|\phi_{T,j}-\Lp\phi_{T,j}\|^2_{L^2(T)} \\
      & \lesssim\frac{\|H\mathbf{b}\|_{L^\infty(T)}}{\alpha\ell^{2}}\Enormhw{\phi_{T,j}}{\widetilde\omega^{m}_T\setminus\widetilde\omega^{m+1}_T}^2.
    \end{aligned}
  \end{equation}
  We obtain 
  \begin{equation}\label{eq:decay5}
    \begin{aligned}
      &\Enormhw{\phi_{T,j}}{\Omega\setminus\omega^{m}_T}^2 \lesssim \ell^{-1}\left(\ba^2+\frac{\|H\mathbf{b}\|_{L^\infty(T)}}{\alpha}\right)\Enormhw{\phi_{T,j}}{\Omega\setminus\omega_T^{m+1}}^2 \\
      & =  C_3\ell^{-1}\Enormhw{\phi_{T,j}}{\Omega\setminus\widetilde\omega_T^{m+1}}^2
    \end{aligned}
  \end{equation}
  where $C_3 = C(\ba^2+\|H\mathbf{b}\|_{L^\infty(\Omega)}\alpha^{-1})$
  and $C$ is the generic constant hidden in '$\lesssim$'.  We have
  \begin{equation}\label{eq:decayR}
    \Enormhw{\phi_{T,j}}{\Omega\setminus\widetilde\omega^{m}_T}^2 \lesssim C_3\ell^{-1}\Enormhw{\phi_{T,j}}{\Omega\setminus\widetilde\omega^{m+1}_T}^2,
  \end{equation}
  for any $m=0,1,\dots,\lfloor\ell k/(\ell+1)\rfloor-1$, which we can
  use recursively as
  \begin{equation}
    \begin{aligned}
    \Enormhw{\phi_{T,j}}{\Omega\setminus\widetilde\omega^{1}_T}^2&\lesssim (C_3\ell^{-1})^{k-1}\Enormhw{\phi_{T,j}}{\Omega\setminus\widetilde\omega^{k}_T}^2  \\ &= (C_3\ell^{-1})^{\lfloor\ell k/(\ell+1)\rfloor-1}\Enormhw{\phi_{T,j}-\lambda_{T,j}}{\Omega}^2.
    \end{aligned}
  \end{equation}
  Note that $k/2$ is a lower bound of $\ell k/(\ell+1)$. Equation \eqref{eq:decaySteg1}
  together with \eqref{eq:decayR}, gives
  \begin{equation}
    \Enormh{\phi_{T,j}-\phi^L_h} \lesssim C_2(C_3\ell^{-1})^{\frac{1}{2}(\ell k/(\ell +1)-1)}\Enormh{\phi_{T,j}-\lambda_{T,j}}.
  \end{equation}
  which concludes the proof is concluded.
\end{proof}

\begin{proof}[\thmref{thm:convergence}]
 Using the triangle inequality, we have
  \begin{equation}
    \Enormh{u-u^{ms,L}_H} \leq \Enormh{u-u_h} + \Enormh{u_h-u^{ms,L}_H}.
  \end{equation}
  Note that, $u_h\in\Vh$, can be decomposed into a coarse, $v^{ms}_H\in\VmsH$, and a fine, $v^f\in\Vfh$, scale contribution, i.e.,  $u_h=v^{ms}_H + u^f$. Also, let $v^{ms,L}_H\in\VmsLH$ be chosen such that $\Lp v^{ms,L}_H = \Lp v^{ms}_H$.
 We have
  \begin{equation}
    \begin{aligned}
      \Enormh{u_h-u^{ms,L}_H} &\lesssim  a_h(u_h-u^{ms,L}_H,u_h-u^{ms,L}_H) \\
      &=  a_h(u_h-u^{ms,L}_H,u_h-v^{ms,L}_H) \\
      &\lesssim \Ccontinuous \Enormh{u_h-u^{ms,L}_H}\Enormh{u_h-v^{ms,L}_H},
    \end{aligned}
  \end{equation}
  and obtain
  \begin{equation}\label{eq:conv1}
    \begin{aligned}
      \Enormh{u-u^{ms,L}_H}\leq &\Enormh{u-u_h} \\
      &   + \Ccontinuous\left(\Enormh{u_h-v^{ms}_H} + \Enormh{v^{ms}_H-v^{ms,L}_H}\right).
    \end{aligned}
  \end{equation}
  The first term in \eqref{eq:conv1} implies that the reference mesh need to be sufficiently fine to get a sufficient approximation. The second term is approximated using \eqref{eq:ideal1}, i.e.
  \begin{equation}
    \Enormh{u_h-v^{ms}_H} \lesssim \alpha^{-1/2}\|H(1-\Lp)f\|_{L^2(\Omega)},
  \end{equation}
and for the last term in we have,
\begin{equation}
  \begin{aligned}
    \Enormh{v^{ms}_H-v^{ms,L}_H}^2 &= \Enormh{\sum_{T\in\T_H,\, j=1,\dots,r}v^{ms}_{H,T}(x_j)(\phi_{T,h}-\phi^L_{T,j})}^2 \\
    &\lesssim C_4L^d\sum_{T\in\T_H,\, j=1,\dots,r} |v^{ms}_{H,T}(x_j)|^2\Enormh{\phi_{T,h}-\phi^L_{T,j}}^2 \\
    &\lesssim C_4C_2^2L^d\gamma^{2L}\sum_{T\in\T_H,\, j=1,\dots,r} |v^{ms}_{H,T}(x_j)|^2\Enormh{\phi_{T,h}-\lambda_{T,j}}^2,
  \end{aligned}
\end{equation}
using \lemref{lem:decayFunc} and \lemref{lem:decay}.

We obtain, using \lemref{lem:bas}, that
\begin{equation}
  \begin{aligned}
    & \sum_{T\in\T_H,\, j=1,\dots,r} |v^{ms}_{H,T}(x_j)|^2\Enormh{\phi_{T,h}-\lambda_{T,j}}^2 \\
    & \leq  \Cbas^2  \sum_{T\in\T_H,\, j=1,\dots,r} \|H^{-1}v^{ms}_{H,T}(x_j)\lambda_{T,j}\|^2_{L^2(\Omega)} \\
    & \lesssim  \Cbas^2\beta \|\sum_{T\in\T_H,\, j=1,\dots,r}H^{-1}v^{ms}_{H,T}(x_j)\lambda_{T,j}\|^2_{L^2(\Omega)} \\
    & = \Cbas^2\beta  \|\sum_{T\in\T_H,\, j=1,\dots,r}H^{-1}v^{ms}_{H,T}(x_j)\Lp(\lambda_{T,j}-\phi_{T,j})\|^2_{L^2(\Omega)} \\
    &\leq  \Cbas^2\beta\|H^{-1}\|_{L^\infty(\Omega)} \|\Lp (v^{ms}_H+u^f)\|^2_{L^2(\Omega)} \\
    &\leq  \Cbas^2\beta \|H^{-1}u_h\|_{L^2(\Omega)}^2 \\
    &\leq \Cbas^2\ba^2\|H^{-1}\|_{L^\infty(\Omega)}\Enormh{u_h}. \\
  \end{aligned}
\end{equation}
holds and we conclude the proof.
 
\end{proof}

\bibliographystyle{plain}
\bibliography{references}
\end{document}